\begin{document}

\title{A data-driven linear-programming methodology for optimal transport}
\author{{
\sc Weikun Chen, Esteban G. Tabak},\\[2pt]
Courant Institute of Mathematical Sciences, New York University\\
}
\maketitle

\begin{abstract}

A data-driven formulation of the optimal transport problem is presented and solved using adaptively refined meshes to decompose the problem into a sequence of finite linear programming problems. Both the marginal distributions and their unknown optimal coupling are approximated through mixtures, which decouples the problem into the the optimal transport between the individual components of the mixtures and a classical assignment problem linking them all.
A factorization of the components into products of single-variable distributions makes the first sub-problem solvable in closed form. The size of the assignment problem is addressed through an adaptive procedure: a sequence of linear programming problems which utilize at each level the solution from the previous coarser mesh to restrict the size of the function space where solutions are sought.
The linear programming approach for pairwise optimal transportation, combined with an iterative scheme, gives a data driven algorithm for the Wasserstein barycenter problem, which is well suited to parallel computing.

\end{abstract}

\section{Introduction}

\label{sec:1}
The optimal transport problem has received a considerable interests in recent years, due in part to its wide scope of applicability in fields that include econometrics, data analysis, fluid dynamics, automatic control, transportation, statistical physics, shape optimization, expert systems and meteorology \cite{Rac98,Vil03}.

In a modern formulation of Monge's original statement of the optimal transport problem, one has two probability density functions $\rho(x)$ and $\mu(y)$, $x, y \in\mathbb{R}^d$. A map $M$ from $\mathbb{R}^d$ to $\mathbb{R}^d$ is said to preserve mass (or to push forward $\rho$ into $\mu$) if, for all bounded subsets $S \subset \mathbb{R}^d$,
\begin{equation}
    \int_{x\in S}\rho(x)\ dx = \int_{y\in M(S)}\mu(y)\ dy \ .
\end{equation}
For smooth one-to-one maps, this yields the differential condition
\begin{equation}
   \left| \mathrm{det}(\nabla M(x)) \right| \mu(M(x))=\rho(x)\ .
\end{equation}
Among all mass preserving maps, one seeks the \emph{optimal map} that minimizes a cost such as the Wasserstein distance
\begin{equation}
    W_p(\rho(x),\mu(y))= \left(\inf_{M}\int\|M(x)-x\|^p\rho(x)\ dx\right)^{1/p},
\end{equation}
\\
where $p\geq1$ is fixed.

Monge's formulation, which seeks a map $M(x)$ connecting $\rho(x)$ and $\mu(y)$, is  nonlinear and does not necessarily have unique solutions. In addition, many applications do not require the transport between $\rho(x)$ and $\mu(y)$ to be one-to-one.
Kantorovich \cite{Kan48} proposed a relaxation where the map $y=M(x)$ is replaced by a more general \emph{coupling} $\pi(x,y)$:
\begin{equation}\label{primal}
\begin{aligned}
&\min_{\pi(x,y)} && \int c(x,y)\pi(x,y)\ dx dy \\
&s.t && \int \pi(x,y)\ dx=\mu(y) \\
&    && \int \pi(x,y)\ dy=\rho(x) \\
&    && \pi(x,y)\geq 0 .
\end{aligned}
\end{equation}
Here $c(x,y)$ is a pointwise cost function such as $\|y-x\|^p$, and the optimization is performed over the set of joint distributions $\pi(x,y)$ with marginals $\rho(x)$ and $\mu(y)$. It has been shown that, under suitable conditions on the cost function and the marginals, the solution to Kantorovich's problem also solves Monge's, i.e. the coupling $\pi(x,y)$ is concentrated on the graph of a function $y=M(x)$ \cite{Gan96}.

Among the methods that have been proposed to solve the optimal transport problem numerically, Benamou and Brenier \cite{Ben00} proposed a fluid mechanical framework, constructing an optimal path from $\rho(x)$ to $\mu(y)$ by solving an optimization problem with a partial differential equation as a constraint. Haber, Rehman and Tannenbaum \cite{Hab10} proposed a modification of the objective function, discretized both the objective function and the constraints and solved the problem using sequential quadratic programming. Marco Cuturi \cite{Cut13} introduced an iterative way of solving the optimal transportation problem by adding an entropic regularization term. The regularized problem can be solved by a fix-point iterative algorithm. Adam Oberman and Yuanlong Ruan used sums of delta functions to discretize the problem into linear programming problems and solved the problem using grid refinement \cite{Adam15}. The methodology developed in this article extends this latter approach by allowing more general models for both the marginal distributions and their coupling, and optimizing them to a higher degree.
Other numerical procedures can be found in \cite{Obe08,Dea06,Cha09,Ang03}.

Most of these procedures assume that the marginal distributions $\rho(x)$ and $\mu(y)$ are known explicitly. Yet this is not the case in the great majority of applications, where these distributions are only known through a finite set of samples $\{x_i\}$ and $\{y_j\}$ from $\rho(x)$ and $\mu(y)$ respectively. A formulation of the optimal transportation problem in terms of samples was proposed in \cite{Tab15}, which also developed a methodology for its numerical solution following a gradient flow in feature-space that pushes $\rho(x)$ to the target distribution $\mu(y)$ through a time-dependent map $z(x; t)$, with $z(x; 0)=x$ and $z(x; \infty) = y(x)$. Another formulation for the sample-based optimal transport and barycenter problems based on local solvers and feature functions was proposed in \cite{Max17b}, and a sample-based preconditioning technique was developed in \cite{Max17}.

In this article, we propose an alternative methodology for solving the sample-based optimal transportation problem, based on a ``divide and conquer'' methodology, where the marginal distributions are approximated through mixtures of elementary distributions with support on the individual cells of a rectangular grid. By modeling the coupling also as a mixture of distributions gives rise to two subproblems: the optimal pairwise transport between the individual components of the mixtures and a global linear assignment problem linking these various local optimal solutions. When the components are given by the product of one-dimensional densities, the first sub-problem can be solved in closed form. The rectangular grids are sequentially refined, with the space of allowable solutions limited to a proper size using considerations involving the solution of the previous, coarser problem.
The formulation developed here differs from ordinary rectangle discretization in three main ways: a) it allows more general approximations to the marginal distributions,  b) under such approximations, the constraints are satisfied exactly, and c) the components of the coupling are chosen optimally rather than as the cartesian product of the components of the marginals. Since these extensions turn out not to increase the size of the problem, they yield a higher order accuracy at no additional cost.

The outline of the paper is as follows. Section 2 develops the ``divide and conquer'' strategy and its two subproblems.
Section 3 further develops the algorithm to refine the rectangular meshes and update the functional family where solutions are sought at each stage using the previous solution. In addition, specifics are provided on the family of functions used to approximate the marginal distributions. Section 4 contains some numerical results using the pairwise optimal transport solver developed. Section 5 combines our methodology with an iterative scheme to solve the Wasserstein barycenter problem. Two applications are discussed in section 6 and a summary is provided in section 7.

\section{A ``divide and conquer'' strategy}

We start with the optimal transport problem in Kantorovich formulation:
\begin{equation}\label{primalG}
\begin{aligned}
&\min && \int c(x,y)\ \pi(x,y)\ dxdy \\
&s.t  && \int \pi(x,y)\ dx=\mu(y) \\
&     && \int \pi(x,y)\ dy=\rho(x) \\
&     &&\pi(x,y)\geq 0
\end{aligned}
\end{equation}
and seek to re-formulate it and solve it in a way that adapts to situations where one only knows the source and target distributions $\rho(x)$ and $\mu(y)$ through two sets of independent samples: $\left\{x_i\right\} \sim \rho$, $\left\{y_j\right\} \sim \mu$, $i \in [1, m]$, $j \in [1,  n]$.

Often the two marginal distributions  can be estimated through mixtures of the form
\begin{equation}
 \rho(x) = \sum_i p_i\rho_i(x), \quad \mu(y) = \sum_j q_j \mu_j(y),
 \label{approx_marginals}
\end{equation}
where the $p_i, q_j$ are non-negative weights that add up to one, and the $\rho_i(x), \mu_j(y)$ are independent probability distributions. Examples include:
\begin{itemize}
\item The empirical distributions
$$ \rho(x) = \frac{1}{m} \sum_i \delta(x-x_i), \quad \mu(y) = \frac{1}{n} \sum_j \delta(y-y_j), $$

\item The kernel density estimations
$$ \rho(x) = \frac{1}{m} \sum_i K(x-x_i), \quad \mu(y) = \frac{1}{n} \sum_j K(y-y_j), $$
with specified kernels $K$ , such as isotropic Gaussians with given bandwidths,

\item Gaussian mixtures, where the  $\{\rho_i(x)\}, \{\mu_j(y)\}$ are Gaussians distributions with means, covariance matrices and weights estimated from the data through the EM procedure,

\item Histograms, i.e. piecewise-constant approximations to $\{\rho_i(x)\}, \{\mu_j(y)\}$ estimated by counting samples within each cell of a grid,

\item Generalizations of the above, such as piecewise linear distributions and locally defined products of single-variable functions (we will discuss the latter in more depth below.)

\end{itemize}

At a similar level of approximation, one may propose to model the unknown coupling by a distribution of the form:
\begin{equation}
 \pi(x,y) = \sum_{i,j} \lambda^{i,j} \pi_{ij}(x,y),
 \label{pi_sum}
\end{equation}
where the $\pi_{ij}(x,y)$ satisfy the marginal constraints
\begin{equation}
\int \pi_{ij}(x,y)\ dy = \rho_i(x), \quad \int \pi_{ij}(x,y)\ dx = \mu_j(y).
\label{ij_marginals}
\end{equation}
(Distributions $\pi_{ij}$ satisfying (\ref{ij_marginals}) always exist, the simplest example consisting of the products $\pi_{ij}(x,y) = \rho_i(x)\ \mu_j(y)$.)
Then we have
$$ \int \pi(x,y) dy =  \sum_{i,j} \lambda^{i,j} \rho_i(x), \quad \int \pi(x,y) dx =  \sum_{i,j} \lambda^{i,j} \mu_j(y), $$
so the constraints in (\ref{primalG}) with marginals from (\ref{approx_marginals}) are satisfied exactly by any solution $\lambda^{i,j}$ of the system
\begin{equation}\label{marginal_constraints}
\begin{aligned}
& \sum_{j} \lambda^{i,j}   =  p_i \\
& \sum_{i} \lambda^{i,j}   =  q_j \\
& \lambda^{i,j} \ge 0.
\end{aligned}
\end{equation}
Moreover, the first two sets of conditions are also necessary when the distributions $\rho_i(x)$ and $\mu_j(y)$ are independent, while positivity is a natural requirement for a mixture of distributions.

\begin{theorem}\label{generalTheorem}
Problem (\ref{marginal_constraints}) always has solutions when $p_i\geq 0$, $q_j\geq 0$ and $\sum_i p_i = \sum_j q_j$.
\end{theorem}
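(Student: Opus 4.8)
The plan is to prove that the transportation polytope defined by the system is nonempty by exhibiting an explicit feasible point, rather than by any fixed-point or compactness argument. Set $S=\sum_i p_i=\sum_j q_j$. If $S=0$, then since the $p_i$ and $q_j$ are nonnegative we must have every $p_i=0$ and every $q_j=0$, so $\lambda^{i,j}\equiv 0$ satisfies all three conditions trivially. Thus the only substantive case is $S>0$.

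Assuming $S>0$, the natural witness is the independent (product) coupling
\[
\lambda^{i,j}=\frac{p_i\,q_j}{S}.
\]
First I would observe nonnegativity, which is immediate since $p_i,q_j\ge 0$ and $S>0$. Then I would verify the row and column sums by direct computation: $\sum_j \lambda^{i,j}=\frac{p_i}{S}\sum_j q_j=\frac{p_i}{S}\cdot S=p_i$, and symmetrically $\sum_i \lambda^{i,j}=\frac{q_j}{S}\sum_i p_i=q_j$. This is precisely the finite-dimensional analogue of the remark already made in the text that the products $\pi_{ij}(x,y)=\rho_i(x)\,\mu_j(y)$ satisfy the componentwise marginal constraints; here we are distributing the scalar weights in the same independent fashion.

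There is essentially no obstacle here — the statement is the classical fact that a transportation polytope with nonnegative margins of equal total mass is nonempty, and the product solution is a one-line certificate. The only points worth flagging are the degenerate case $S=0$ treated above, and the fact that the hypothesis $\sum_i p_i=\sum_j q_j$ is not merely sufficient but necessary: summing the first family of equations over $i$ and the second over $j$ both yield $\sum_{i,j}\lambda^{i,j}$, forcing $\sum_i p_i=\sum_j q_j$. Should a more combinatorial proof be preferred (e.g. to obtain a sparse vertex of the polytope), I would instead invoke the North–West corner rule, which greedily saturates rows and columns and produces a feasible $\lambda$ with at most $m+n-1$ nonzero entries; but for the theorem as stated the explicit product solution is the most economical route.
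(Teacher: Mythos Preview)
Your proof is correct and follows essentially the same route as the paper: exhibit the product coupling $\lambda^{i,j}=p_i q_j/S$ and verify the constraints directly. The only difference is that you also treat the degenerate case $S=0$ and add remarks on necessity and the North--West corner rule, which the paper omits.
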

\noindent
\emph{\bf Proof:} Introduce $S=\sum_i p_i = \sum_j q_j > 0$ ($S=1$ in our case of interest, since the $p_i$ and $q_j$ are weights in a mixture of distributions.) One can directly construct an explicit solution to (\ref{marginal_constraints}) by setting $\lambda^{i,j}=\frac{p_i  q_j}{S}$.

\medskip

It follows that the following approximation to (\ref{primalG}):
\begin{equation}\label{relaxed_primal_2}
\begin{aligned}
&\min_{\lambda, \pi} && \sum_{i,j} \lambda^{i,j} \iint c(x,y) \pi_{ij}(x,y) \ dxdy \\
&s.t  && \sum_{j} \lambda^{i,j}   =  p_i \\
&     && \sum_{i} \lambda^{i,j}   =  q_j. \\
&     && \int \pi_{ij}(x,y) \ dy = \rho_i(x) \\
&     && \int \pi_{ij}(x,y) \ dx = \mu_j(y) \\
&     &&\lambda^{i,j}\geq 0, \pi_{ij}(x,y) \ge 0
\end{aligned}
\end{equation}
is always feasible. Moreover, in the optimal solution to (\ref{relaxed_primal_2}), each $\pi_{ij}(x,y)$ is the optimal plan with marginals $\rho_i(x)$ and $\mu_j(y)$, so this optimization problem can be decomposed into the sub problems

\begin{equation}\label{sub_1}
\begin{aligned}
C_{ij} :=\  &\min_{\pi_{ij}} && \iint c(x,y) \pi_{ij}(x,y) \ dxdy \\
&s.t  && \int \pi_{ij}(x,y) \ dy = \rho_i(x) \\
&     && \int \pi_{ij}(x,y) \ dx = \mu_j(y) \\
&     &&\pi_{ij}(x,y)\geq 0
\end{aligned}
\end{equation}
and
\begin{equation}\label{sub_2}
\begin{aligned}
&\min_{\lambda} && \sum_{i,j} C_{ij}\lambda^{i,j} \\
&s.t  && \sum_{j} \lambda^{i,j}   =  p_i \\
&     && \sum_{i} \lambda^{i,j}   =  q_j. \\
&     &&\lambda^{i,j}\geq 0,
\end{aligned}
\end{equation}
both of which have feasible solutions.

\medskip.

\noindent
{\bf Remarks:}
\begin{itemize}
\item When $\rho_i = \delta(x-x_i)$, $\mu_j = \delta(y-y_j)$, $p_i=1/m$, $q_j=1/n$ --i.e. for the empirical marginal distributions-- we have that $\pi_{ij} = \delta(x-x_i) \delta(y-y_j)$ and $C_{ij} = c(x_i,y_j)$, and the problem reduces to the optimal assignment between the $\{x_i\}$ and $\{y_j\}$.

\item If one introduces regular grids in $x$ and $y$ space, sets the $\rho_i$ and $\mu_j$ as the normalized characteristic functions for cells $i$ and $j$ in these grids, and instead of solving (\ref{sub_1}) one uses the sub-optimal $\pi_{ij} = \rho_i \mu_j$ and defines $C_{ij} = c(\bar{x}_i,\bar{y}_j)$, where $\bar{x}_i, \bar{y}_j$ are the centers of the corresponding cells, one recovers the linear-programming procedure in \cite{Adam15}.
\end{itemize}

Assessing the formulation in (\ref{sub_1}, \ref{sub_2}) involves two kinds of considerations: how well its solution (\ref{pi_sum}) approximates the true optimal solution $\pi(x,y)$ to (\ref{primalG}) --its ''quality'', and how difficult and computationally expensive it is to find this solution --its computability.

The quality of the solution depends on two factors: the accuracy of the proposed estimation of the marginal distributions through mixtures in (\ref{approx_marginals}), and the locality of the components $\rho_i(x), \mu_j(y)$: if the optimal map establishes a one-to-one correspondence between them --i.e. if the $\lambda_{ij} \in \{0,1\}$-- then the optimal $\pi(x,y)$ agrees with its estimation in (\ref{pi_sum}) if the approximation (\ref{approx_marginals}) is exact. On the other hand, if a $\rho_i(x)$ is mapped onto various $\mu_j(y)$, true-optimality can no longer be guaranteed.

Computability, on the other hand, depends on three distinct factors: the size of the linear programming problem in (\ref{sub_2}) (i.e. the number of unknown $\lambda_{ij}$), the difficulty of the density estimation leading to (\ref{approx_marginals}), and the difficulty of the solution to each individual optimal transport problem in (\ref{sub_1}).

Addressing these considerations leads to an effective algorithm:
\begin{itemize}
\item In order to enforce the locality of the $\rho_i, \mu_j$, one would like these to have compact, concentrated and non-overlapping support. Ideally, each $\rho_i$ and $\mu_j$ should have support in an area of small diameter, yet large enough to contain a number of sample points permitting a robust density estimation.

\item The requirement above could lead to overwhelmingly large linear programming problems to solve. To avoid this, one needs to reach this point having restricted the number of unknowns, i.e. the $\lambda^{ij}$ with potentially nonzero values. This can be achieved by an adaptive refinement procedure similar to the one developed in \cite{Adam15}. Adaptive refinement is easiest for rectangular grids, which suggests proposing $\rho_i, \mu_j$ with support in disjoint rectangular cells.

\item One way to achieve inexpensive density estimations for the marginal distributions suggests modeling each $\rho_i(x)$ and $\mu_i(y)$ as products of one dimensional probability densities, whose estimation is comparatively easy.

\item As shown by theorem \ref{optimalPlanOfIndependentFunctions} below, this proposal for the $\rho_i, \mu_j$ also provides analytical solutions to the subproblems in (\ref{sub_1}), thus reducing the computation to a sequence of discrete assignment problems.

\end{itemize}

\begin{theorem}\label{optimalPlanOfIndependentFunctions}
Consider the optimal transport problem (\ref{primalG}) in $\mathbb{R}^d$ with cost function $c(x,y) = \frac{1}{2}\|y-x\|^2$, and assume that the source and target probability distributions factorize into one-dimensional densities:
$$ \rho(x) = \prod_{l=1}^d \rho_l(x_l), \quad \mu(y) = \prod_{l=1}^d \mu_l(y_l).$$
Denote the optimal plan and optimal map between $\rho_l(x_l)$ and $\mu_l(y_l)$ by $\pi_l(x_l,y_l)$ and $m_l(x_l)$ respectively. Then
\begin{enumerate}
\item The optimal plan between $\rho(x)$ and $\mu(y)$ is given by
$$\pi(x,y) = \prod_{l=1}^d \pi_l(x_l,y_l), $$
\item The optimal map pushing forward $\rho(x)$ to $\mu(y)$ is given by
$$ y=m(x) = (m_1(x_1),\ldots,m_d(x_d))^T, $$
\item The optimal value $C$ of the transport problem between $\rho(x)$ and $\mu(y)$ equals the sum of the optimal values $C_l$ of the transport problems between $\rho_l(x_l)$ and $\mu_l(y_l)$:
$$ C = \sum_{l=1}^d C_l . $$
\end{enumerate}
\end{theorem}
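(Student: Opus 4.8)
The plan is to prove all three statements simultaneously by verifying that the product plan $\pi(x,y) = \prod_{l=1}^d \pi_l(x_l,y_l)$ is feasible for the $d$-dimensional problem, computing its cost, and then showing this cost is a lower bound for any competitor, so that the candidate is optimal. The three claims then follow: optimality of $\pi$ is claim 1; the fact that it is supported on the graph of $m(x) = (m_1(x_1),\ldots,m_d(x_d))^T$ (because each $\pi_l$ is supported on the graph of $m_l$, a consequence of Brenier's theorem in one dimension) gives claim 2; and the value computation gives claim 3.

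First I would check feasibility: integrating $\prod_l \pi_l(x_l,y_l)$ over $y$ factorizes as $\prod_l \int \pi_l(x_l,y_l)\,dy_l = \prod_l \rho_l(x_l) = \rho(x)$, and similarly the $x$-integral gives $\mu(y)$; nonnegativity is immediate. Next I would compute the cost of this candidate. Since $c(x,y) = \tfrac12\|y-x\|^2 = \sum_{l=1}^d \tfrac12 (y_l - x_l)^2$, the integral $\iint c(x,y)\prod_l \pi_l\,dx\,dy$ splits into a sum over $l$ of terms $\iint \tfrac12(y_l-x_l)^2 \pi_l(x_l,y_l)\,dx_l\,dy_l$, each of the other factors integrating to one; this yields exactly $\sum_l C_l$.

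The main obstacle is the lower bound: I must show that \emph{every} feasible coupling $\pi(x,y)$ with marginals $\rho,\mu$ has cost at least $\sum_l C_l$. The key observation is that for a general $\pi$, its $l$-th two-dimensional marginal $\pi^{(l)}(x_l,y_l) := \int \pi(x,y)\,dx_{\neq l}\,dy_{\neq l}$ has marginals $\rho_l(x_l)$ and $\mu_l(y_l)$ — this uses the product structure of $\rho$ and $\mu$ crucially, since $\int \rho(x)\,dx_{\neq l} = \rho_l(x_l)$. Hence $\pi^{(l)}$ is feasible for the one-dimensional problem between $\rho_l$ and $\mu_l$, so $\iint \tfrac12(y_l-x_l)^2 \pi^{(l)}(x_l,y_l)\,dx_l\,dy_l \ge C_l$. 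Summing over $l$ and using $c(x,y) = \sum_l \tfrac12(y_l-x_l)^2$ together with Fubini gives $\iint c\,\pi \ge \sum_l C_l$. This matches the cost of the candidate, establishing optimality and claim 3 at once.

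Finally, to get claims 1 and 2 in the strong form stated, I would invoke Brenier's theorem (valid here since $c$ is quadratic and the one-dimensional marginals may be assumed absolutely continuous, or more generally using the known structure of one-dimensional optimal transport via monotone rearrangement): each one-dimensional optimal plan $\pi_l$ is unique and concentrated on the graph of the monotone map $m_l$. Therefore the product plan is concentrated on $\{(x, m(x))\}$ with $m = (m_1,\ldots,m_d)^T$, and by the lower-bound argument it achieves the optimal cost, so it \emph{is} the optimal plan and $m$ \emph{is} the optimal map. I would remark that uniqueness of the $d$-dimensional optimal plan also follows from Brenier in $\mathbb{R}^d$, so the product plan is the optimal plan, not merely an optimal one.
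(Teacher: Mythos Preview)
Your argument is correct, but it differs from the paper's in the key step of certifying optimality. The paper works dually: from each one-dimensional problem it takes Kantorovich potentials $\phi_l,\psi_l$ with $\phi_l(x_l)+\psi_l(y_l)\le \tfrac12(x_l-y_l)^2$ and no duality gap, sums them to obtain $\phi(x)=\sum_l\phi_l(x_l)$, $\psi(y)=\sum_l\psi_l(y_l)$, checks that this pair is dual-feasible for the $d$-dimensional problem, and then verifies that the dual objective at $(\phi,\psi)$ equals the primal cost $\sum_l C_l$ of the product plan. You instead give a purely primal lower bound: project an arbitrary coupling $\pi$ onto each coordinate pair $(x_l,y_l)$, observe the projection is feasible for the $l$-th one-dimensional problem, and sum the resulting inequalities. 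Your route is slightly more elementary in that it never invokes the existence of optimal potentials for the 1D problems; it needs only the \emph{values} $C_l$. The paper's route, in exchange, produces the $d$-dimensional dual optimizers explicitly, from which the optimal map drops out as $\nabla$ of a potential without a separate appeal to the support structure of the $\pi_l$; you instead recover the map by noting that each $\pi_l$ is concentrated on the graph of $m_l$ and invoking Brenier for uniqueness. Both are sound; they simply allocate the use of 1D optimal-transport theory (strong duality versus monotone-rearrangement structure) to different places.
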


\begin{proof}

Because the $\pi_l(x_l,y_l)$ have marginals $\rho_l(x_l))$ and $\mu_l(y_l)$, it follows that $\pi(x,y) = \prod_{l=1}^d \pi_l(x_l,y_l)$ has marginals $\rho(x)$ and $\mu(y)$, and constitutes therefore a feasible solution. For instance,
$$ \int \pi(x,y) \ dy = \int \left(\prod_{l=1}^d \pi_l(x_l,y_l)\right) \ dy_1 \ldots dy_d = \prod_{l=1}^d \rho_l(x_d) = \rho(x). $$
Furthermore, because of the optimality of the $\pi_l(x_l,y_l)$, we can find dual functions $\phi_l(x_l)$, $\psi_l(y_l)$ such that:
\begin{equation}\label{dualConstraints}
 \phi_l(x_l) + \psi_l(y_l) \leq \frac{1}{2}(x_l-y_l)^2
\end{equation}
and
\begin{equation}\label{dualityGap}
\begin{aligned}
C_l = \iint \frac{1}{2}(x_l-y_l)^2\pi_l(x_l,y_l) dx_ldy_l\\
 = \int \phi_l(x_l)\rho_l(x_l) dx_l + \int \psi_l(y_l)\mu_l(y_l) dy_l.
\end{aligned}
\end{equation}
Introducing
$$ \phi(x) = \sum_{l=1}^d \phi_l(x_l), \quad \psi(y) = \sum_{l=1}^d \psi_l(y_l), $$
we obtain, adding up over $l$ the inequalities in (\ref{dualConstraints}), that
$$
 \phi(x) + \psi(y) \leq \frac{1}{2}\|x-y\|^2.
$$
On the other had, we have
$$
\begin{aligned}
&C = \iint \frac{1}{2}\|x-y\|^2\pi(x,y) dx dy = \iint \left(\sum_{l=1}^d \frac{1}{2}(x_l-y_l)^2\right) \pi(x,y) dx dy\\
& = \sum_{l=1}^d \iint \frac{1}{2}(x_l-y_l)^2 \pi_l(x_l,y_l) \prod_{h\ne l} \pi_h(x_h,y_h) dx dy & \\
& = \sum_{l=1}^d \iint \frac{1}{2}(x_l-y_l)^2 \pi_l(x_l,y_l) dx_l dy_l = \sum_{l=1}^d  C_l\\
& = \sum_{l=1}^d  \int \phi_l(x_l) \rho_l(x_l) dx_l + \int \psi_l(y_l) \mu_l(y_l) dy_l &\\
&= \sum_{l=1}^d  \int \phi_l(x_l) \rho(x) dx + \int \psi_l(y_l) \mu(y) dy  = \int \phi(x) \rho(x) dx + \int \psi(y) \mu(y) dy,&
\end{aligned}
$$
which proves the optimality of the feasible solution $\pi(x,y)$ and, in addition, that $C = \sum_{l=1}^d C_l$ and that the $\phi, \psi$ so defined constitute the optimal solution of the dual problem. It follows that
\begin{equation}
y(x) = \nabla\phi(x) = \left(\frac{d\phi_1(x_1)}{dx_1},\ldots, \frac{d\phi_d(x_d)}{dx_d}\right)^T = (m_1(x_1),\ldots,m_d(x_d))^T,
\end{equation}
which finishes the proof.
\end{proof}

This theorem addresses simultaneously two of the considerations above: estimating the one-dimensional marginal distributions $\rho_{i,l}(x_l), \mu_{j,l}(y_l)$ is comparatively easy (one simply disregards all other components of the samples; we will propose some simple forms for these one-dimensional estimates below), and finding the optimal map $m_{ij,l}(x_l)$ between the one dimensional distributions $\rho_{i,l}(x_l)$ and $\mu_{j,l}$ is straightforward:
\begin{equation}
 m_{ij,l}(x_l) = Q_{j,l}^{-1} \left(P_{i,l}(x_l)\right),
 \label{opt_1d}
\end{equation}
where $P_{i,l}$ and $Q_{j,l}$ are the cumulative distributions of $\rho_{i,l}$ and $\mu_{j,l}$ respectively.
With these tasks performed, the whole problem turns into solving the assignment problem in (\ref{sub_2}).

\section{The procedure}

 From the discussion above, we need to solve the linear programming problem (\ref{sub_2}) with costs $C_{ij}$ given by the solutions to  (\ref{sub_1}), where the $\rho_i$  have support on cell $i$ of a rectangular grid and consist of products of one dimensional distributions $\rho_i^l(x_l)$, and similarly for $\mu_j$. In order to implement an algorithm along these lines, we need to specify the following:
\begin{enumerate}
\item The grid.

\item The estimation procedure for the $p_i$, $q_j$.

\item The form of the proposed one dimensional distributions $\rho_i^l$, $\mu_j^l$ and the procedure to estimate them.
\end{enumerate}

Yet the grid cannot be set at once in its final form, as the number of unknowns $\lambda^{ij}$ would be very large. To bypass this constraint and yet achieve high resolution, we use an adaptive refinement process similar to the one developed in \cite{Adam15}.
At each refinement step, the question is how to use the solution on the coarser grid to reduce the set of available $\lambda^{ij}$ on the finer grid,  while keeping the problem feasible and increasing the accuracy of the solution.
The tradeoff here is that, if we consider a large set of $\lambda$'s, the problem will become intractable soon, while if the set it too small, the problem may become infeasible or its solution may be suboptimal from the perspective of the full problem. In the procedure that follows, we provide a process for refinement that guarantees feasibility while addressing accuracy:

\begin{enumerate}
\item Start with a very coarse grid, where the support of each component of the $\{x_i\}$ is divided into two segments, so we end up with $2^d$ cells in the grid (Here and below, whenever we describe a procedure as it applies to the $x$, $\{x_i\}$, $\rho_i$, the same procedure is implied for the $y$, $\{y_j\}$, $\mu_j$.) We set the initial $S$ as the set of all possible pairs $(i,j)$ between the two grids, with cardinality $|S| = 2^{2d}$.

\item Assign the weights $p_i$ simply as the fraction of samples in cell $i$.

\item Estimate the $\rho_i^l$ (proposals are described below), compute the $C_{ij}$ and solve \ref{sub_2}, where only the $\lambda_{ij}$ with $(i,j) \in S$ are allowed to be nonzero.

\item Divide each segment containing more than $n_{min}$ samples $\{x_i\}$ into subsegments, and use the procedure described below to update $S$.

\item Return to step 2.

\end{enumerate}

\subsection{The refinement step}

The refinement step consists of two sub-steps: partitioning each segment containing enough samples to create a finer grid, and deciding which subset $S$ of all possible cells $(i,j)$ in the new grid is eligible for candidate nonzero values of $\lambda^{ij}$.

\subsubsection{Segment partition}

A basic ingredient of the adaptive scheme is the division of each cell into two or more subcells, which we choose to do dimension by dimension. A number of choices arise on when, into how many and how to divide a given segment; the other components of the algorithm are blind to these choices.

Regarding ``when'', one may chose to, at each step, divide all segments. Alternatively, one may choose to divide only those segments that contain sufficient samples points. For instance, one may order the segments along each dimension by number of samples, and divide only those in the first quartile.

As to the ``into how many'', we have adopted binary divisions, but other choices may be made. For instance, in combination with the choice above on when to partition, we may subdivide more finely those segments with the larger number of samples.

Finally on the ``how'', one may partition each segment into sub-segments of the same length or with the same number of samples, or a weighted balance between the two.

In the experiments of section \ref{sec1.4}, we have adopted the choice of always dividing each segment into two sub-segments of equal length, except in the high-dimensional examples, where we adopt a different choice described in that section.

\subsubsection{The minimal set}

Each cell $i$ in the newly divided grid has a parent cell $h$ in the coarser grid of the prior step. We first consider the set $S_{min}$ consisting of all those pairs of cells whose pairs of parent cells $(h,k)$ have nonzero $\lambda^{hk}$ in the prior solution. This choice guarantees feasibility of (\ref{sub_2}), as shown by the following theorem:

\begin{theorem}\label{Feasibility}
There exists a set of $\lambda^{ij}$ satisfying the constraints (\ref{sub_2}) such that $\lambda^{ij} = 0$ whenever $\Lambda^{hk}=0$, where $(h,k)$ are the parent cells of $(i,j)$ and $\Lambda^{hk}$ denotes a feasible solution of (\ref{sub_2}) on the coarser grid.

\end{theorem}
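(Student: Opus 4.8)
The plan is to exhibit an explicit feasible $\lambda^{ij}$ obtained by \emph{splitting} each nonzero coarse mass $\Lambda^{hk}$ proportionally among the child pairs $(i,j)$ it subtends. First I would record the compatibility between the coarse and fine weights. Writing $P_h$ and $Q_k$ for the weights on the coarser grid (the fraction of samples in the coarse cells $h$ and $k$), and noting that the children of $h$ partition $h$, every sample in $h$ lands in exactly one child cell, so
$$ \sum_{i:\ \mathrm{parent}(i)=h} p_i \;=\; P_h, \qquad \sum_{j:\ \mathrm{parent}(j)=k} q_j \;=\; Q_k . $$

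Next, for every coarse pair $(h,k)$ with $P_h>0$ and $Q_k>0$, and for each child $i$ of $h$ and child $j$ of $k$, I would set
$$ \lambda^{ij} \;=\; \Lambda^{hk}\,\frac{p_i}{P_h}\,\frac{q_j}{Q_k}, \qquad (h,k)=(\mathrm{parent}(i),\mathrm{parent}(j)). $$
Non-negativity is immediate from $\Lambda^{hk}\ge 0$, and by construction $\lambda^{ij}=0$ whenever $\Lambda^{hk}=0$, which is exactly the support restriction required by the theorem. It then remains to check the two marginal constraints of (\ref{sub_2}) on the fine grid. Grouping the sum over $j$ by the parent $k$ of each child and using the weight identity above,
$$ \sum_{j} \lambda^{ij} \;=\; \frac{p_i}{P_h}\sum_{k} \frac{\Lambda^{hk}}{Q_k}\!\!\sum_{j:\ \mathrm{parent}(j)=k}\!\! q_j \;=\; \frac{p_i}{P_h}\sum_{k}\Lambda^{hk} \;=\; \frac{p_i}{P_h}\,P_h \;=\; p_i, $$
where the last equality but one uses the feasibility $\sum_k \Lambda^{hk}=P_h$ of $\Lambda$ on the coarser grid; the computation of $\sum_i \lambda^{ij}=q_j$ is symmetric.

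The only point requiring a little care is the degenerate case of empty cells. If $P_h=0$, then $\sum_k \Lambda^{hk}=0$ and non-negativity force $\Lambda^{hk}=0$ for every $k$, while also $p_i=0$ for every child $i$ of $h$; for such pairs one simply sets $\lambda^{ij}=0$, which is consistent with every constraint, and similarly if $Q_k=0$. In the algorithm of Section 3 this case does not even arise, since only cells containing samples are subdivided, so this bookkeeping is the only — and mild — obstacle; the rest is the routine verification sketched above.
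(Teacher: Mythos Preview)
Your proof is correct and is essentially identical to the paper's own argument: both exhibit the explicit feasible point $\lambda^{ij}=\dfrac{p_i\,q_j}{P_{h(i)}\,Q_{k(j)}}\,\Lambda^{hk}$ and verify the fine-grid marginal constraints by grouping the sums over parent cells and invoking the coarse-grid feasibility of $\Lambda$. Your added remark on the degenerate case $P_h=0$ or $Q_k=0$ is a welcome clarification that the paper leaves implicit.
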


\begin{proof}
Introduce the following notation:
$$ h(i), k(j): \hbox{parent cells of $(i,j)$}, $$
$$ I_h, J_k : \hbox{Sets of $\{i\}$, $\{j\}$ that have $h$, $k$ as parent cells},$$
$$ P_h, \ Q_k, \ \Lambda^{hk}: \hbox{weights and feasible solution in the coarser grid}.$$
Then, because of the choice of selecting the weights by counting samples,
$$ \sum_{i\in I_h} p_i = P_h, \quad \sum_{j\in J_k} q_j = Q_k,$$
and we can satisfy the constraints in (\ref{sub_2}) with nonzero values of $\lambda^{ij}$ in $S_{min}$ given by
$$ \lambda^{ij} = \frac{p_i q_j}{P_{h(i)} Q_{k(j)}} \Lambda^{hk}, $$
since
$$ \sum_j \lambda^{ij} = \sum_k \sum_{j\in J_k} \lambda^{ij} =  \sum_k \frac{p_i}{P_{h(i)}} \Lambda^{hk} = p_i$$
and
$$ \sum_i \lambda^{ij} = \sum_h \sum_{i\in I_h} \lambda^{ij} =  \sum_h \frac{q_j}{Q_{k(j)}} \Lambda^{hk} = q_j.$$
\end{proof}

\subsubsection{Additional cells}

The set $S_{min}$ described in the prior subsection guarantees feasibility, but may be suboptimal, as the true optimal solution in the finer grid may include transfer between cells whose parents did not interact at the coarser level. Yet one expects, from the  smoothness of the optimal maps, that such new interactions will involve cells in the immediate neighborhood of those in $S_{min}$. This suggest the following strategy for enlarging the support from $S_{min}$ to a set that, while larger, has treatable size:

For each $(i,j)$ in $S_{min}$, find the neighbors $I_i$ of the $i$th x-rectangle and the neighbors $J_j$ of the $j$th y-rectangle, and add $(i, J_j)$ and $(I_i, j)$ to $S_{min}$. Here the neighborhood $I_i$ is defined as the set of rectangles that have at least one point in common with rectangle $i$.

\subsection{A simple class of one-dimensional distributions}

In order to complete the algorithm's description, it only remains to specify the estimation of the $\rho_i^l$ with support in the interval $\left[x_{left}^{i,l}, x_{right}^{i,l}\right]$ from the subset of $l$-component of the samples $\{x_k\}$ that lie in that interval and, mutatis mutandis, the estimation of the $\mu_j$. Since the optimal map between any one-dimensional $\rho_i(x)$ and $\mu_j(y)$ is given straightforwardly by (\ref{opt_1d}), the choices here are quite broad. Two extreme scenarios in terms of complexity are the uniform
$$ \rho_i^l(x^l) =\begin{cases}  \frac{1}{\left[x_{right}^{i,l}- x_{left}^{i,l}\right]} & \hbox{for $x^l \in \left[x_{left}^{i,l}, x_{right}^{i,l}\right]$} \\
0 & \hbox{elsewhere} \end{cases},$$
which requires no estimation and yields a $d$-dimensional histogram as an estimation for the full marginal $\rho(x)$, and the piecewise linear, empirically defined $\rho_i^l(x^l)$, with constant slope in the interval between any two consecutive points equidistant between two consecutive sorted samples.

For the examples in this article, we have made the second-simplest choice of a linear $\rho_i^l(x^l)$:
$$ \rho_i^l(x^l) =\begin{cases}  \frac{1 + a \left(x^l-\bar{x}_i^l \right)}{\left[x_{right}^{i,l}- x_{left}^{i,l}\right]} & \hbox{for $x^l \in \left[x_{left}^{i,l}, x_{right}^{i,l}\right]$} \\
0 & \hbox{elsewhere,} \end{cases}$$
where
$$ \bar{x}_i^l = \frac{x_{right}^{i,l}+x_{left}^{i,l}}{2}, $$
$$ a = \min(\max(a_0, -b), b), \quad a_0 =  \frac{4 \sum_k \left(x_k^l -  \bar{x}_i^l\right)}{n_k \left[x_{right}^{i,l}- x_{left}^{i,l}\right]}, \quad b = \frac{2}{\left[x_{right}^{i,l} - x_{left}^{i,l}\right]} ,$$
where the $n_k$ values $x_k^l$ are the $l$-components of those samples that lie in the interval considered. The definition of $a$ above follows from imposing that the mean should agree with the empirical one when this does not violate the positivity of $\rho$.

\section{Numerical results}

\label{sec1.4}
\subsection{Measures of performance}
In order to assess the numerical experiments that follow, we design some quality measures.
\subsubsection{Optimal Map error}

The optimal map follows from the optimal plan through
\begin{equation}\label{optimalMap}
y(x) = \frac{\int y\pi(x,y)dy}{\int\pi(x,y)dy} = \frac{\sum_{ij}\lambda_{ij}\int y\pi_{ij}(x,y)dy}{\sum_{ij}\lambda_{ij}\rho_i(x)}
      = \frac{\sum_{ij}\lambda_{ij}\rho_i(x)m_{ij}(x)}{\sum_{ij}\lambda_{ij}\rho_i(x)},
\end{equation}
where $m_{ij}(x)$ denotes the optimal map between $\rho_i(x)$ and $\mu_j(y)$.

A first measure, when the analytical solution to the problem is known, is to compute the expected L-2 error between the numerical optimal map from Equation \ref{optimalMap} and the analytical solution with respect to the first marginal distribution:

\begin{equation}\label{error1}
\begin{aligned}
E_1&=\left(\int(y(x)-\bar{y}(x))^2\rho(x)dx\right)^{\frac{1}{2}}\\
  &\approx\left(\frac{1}{N}\sum_{l}(y(x_l)-\bar{y}(x_l))^2\right)^{\frac{1}{2}},
\end{aligned}
\end{equation}
where $\bar{y}(x)$ is the analytical solution to the problem.

Similarly, we can find a measure with respect to the second marginal distribution. Combining the two errors gives a first measure of performance.
\subsubsection{Error in the Wasserstein distance.}

The second measure that we use measures the difference between the numerical Wasserstein distance $W$ and the real distance $\tilde{W}$, when this is known:

\begin{equation}
E_2 = W - \tilde{W}.
\end{equation}
%

\subsection{Numerical experiments}

\subsubsection{Plan between 2-D Gaussian distributions}

We first compute the optimal plan between two 2-D Gaussian distributions known through samples. The first Gaussian has mean $m_1=(0,0)$ and covariance matrix $\Sigma_1 = \left(\begin{array}{cc}4 & -1\\-1 & 1\end{array}\right)$ known through 100000 data points. The second Gaussian has mean $m_2 = (0,0)$ and covariance matrix $\Sigma_2 = \left(\begin{array}{cc}9 & 8\\8 & 9\end{array}\right)$ known through 100000 data points.

The reasons for us to use so many samples are: 1) to test the efficiency of the method in dealing with large data sets, and 2) to keep the problem nontrivial after the refining process goes through several steps.

The analytical solution between two Gaussian distributions is given by the linear map:
\begin{equation}
y(x) = \Sigma_2^{1/2}(\Sigma_2^{1/2}\Sigma_1\Sigma_2^{1/2})^{-1/2}\Sigma_2^{1/2}(x-m_1)+m_2.
\end{equation}

The comparison between different methods in map error and objective value error is shown in figures, \ref{figureMapErrorX}, \ref{figureMapErrorY} and \ref{figureCrossterm}. Our linear programming method gives a better optimal map error, as shown in Figures \ref{figureMapErrorX} and \ref{figureMapErrorY}.

Notice that, in Figure \ref{figureCrossterm}, the objective value error for the ordinary LP method is negative --i.e., better than optimal!-- at the first several steps. The objective value error is measured as \[\emph{numerical} - \emph{real optimum},\] and the implication is that, for the first several steps, the ordinary LP method does not fully satisfy the marginal constraints of the true solution. Thus its ``lower'' cost is gained at the cost of violating the constraints.

\begin{figure}
\centering
  \includegraphics[width=13cm,height=8.5cm]{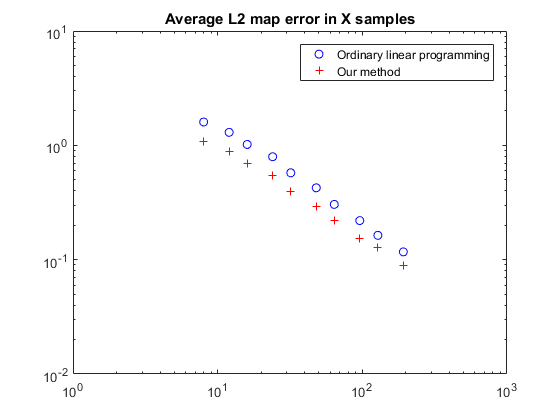}
\caption{Comparison of the optimal map error for each refinement step between two different formulations, in terms of the marginal $X$. The $x$-axis represents the number of cells per dimension.}
\label{figureMapErrorX}
\end{figure}

\begin{figure}
\centering
  \includegraphics[width=13cm,height=8.5cm]{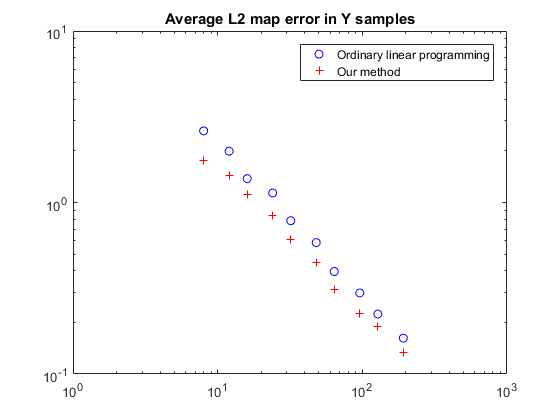}
\caption{Comparison of the optimal map error for each refinement step between different formulations, in terms of the marginal $Y$. The $x$-axis represents the number of cells per dimension.}
\label{figureMapErrorY}
\end{figure}

\begin{figure}
\centering
  \includegraphics[width=12cm,height=8.5cm]{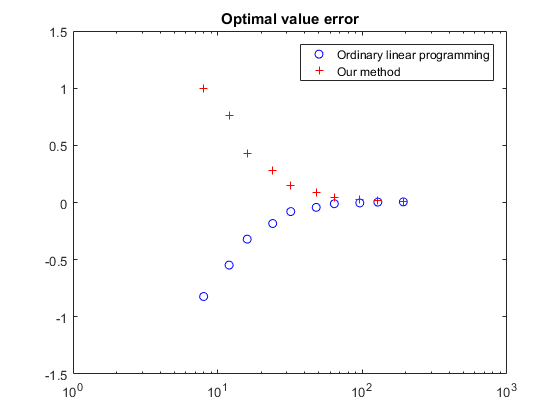}
\caption{Error in the value of the objective function for each refinement step.}
\label{figureCrossterm}
\end{figure}

\subsubsection{2-D Gaussian Mixture}
The next example finds the optimal map between two Gaussian mixtures, each known through 100000 data points.

The first mixture consists of 2 Gaussian distributions with weights (0.5, 0.5), means (4,0), (-4,0) and covariance matries $\left(\begin{array}{cc}1 & -0.5\\-0.5 & 1\end{array}\right)$ and $\left(\begin{array}{cc}9 & 2\\2 & 4\end{array}\right)$.

The second Gaussian mixture is artificially created by using another sample set of the first Gaussian mixture and applying a linear map, which by construction is the optimal map between the two mixtures.

A comparison of the optimal map found through regular LP and our method is shown in Figure \ref{figureMixtureMap}.

\begin{figure}
\centering
  \includegraphics[width=6cm,height=5cm]{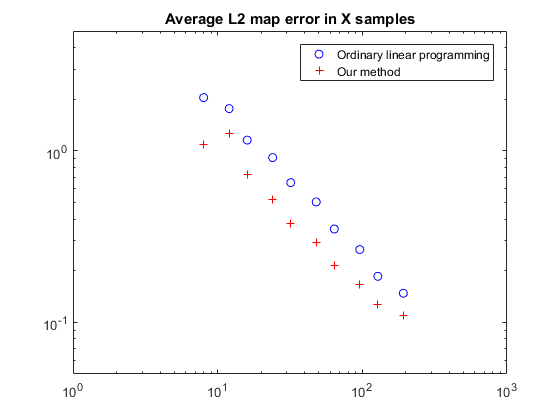}
  \includegraphics[width=6cm,height=5cm]{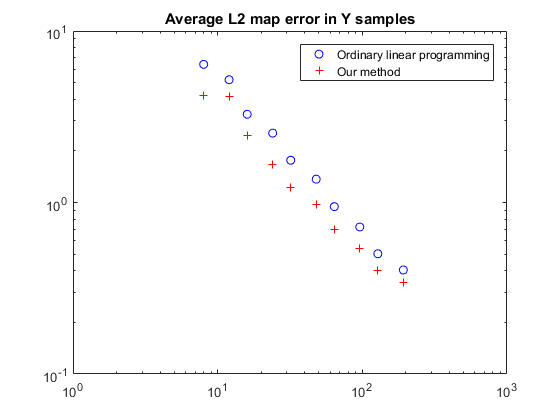}
\caption{Optimal map error in $X$ and $Y$ marginals. The $x$-axis represents the number of cells per dimension.}
\label{figureMixtureMap}
\end{figure}

\subsubsection{A nonlinear map on a Gaussian distribution}

In this example we seek the optimal map between a 2D standard Gaussian distribution $\rho(x)$ and a distribution with density
$$\mu(y) = \frac{9y_1^2y_2^2}{2\pi}e^{-\frac{y_1^6+y_2^6}{2}},$$
for which the optimal map is given by
$$y = (x_1^{1/3},x_2^{1/3})^T.$$
The optimal map error is shown in figure \ref{figureGExample}.

\begin{figure}
\centering
    \includegraphics[width=6cm,height=5cm]{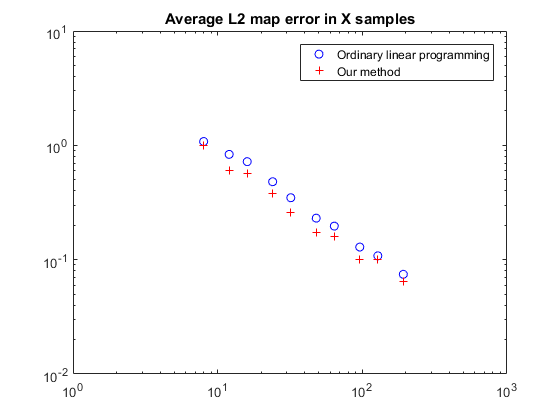}
    \includegraphics[width=6cm,height=5cm]{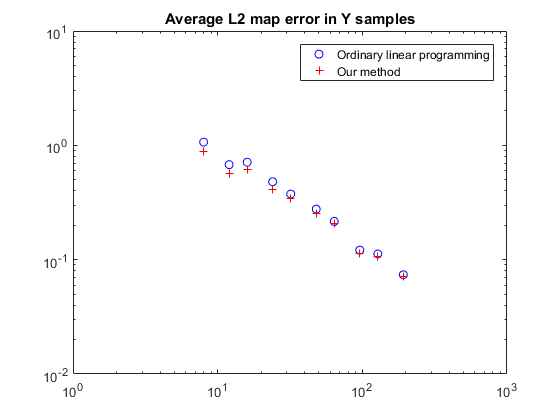}
\caption{Optimal map error for a nonlinear map in the $X$ and $Y$ marginals.}
\label{figureGExample}
\end{figure}

\subsubsection{2D Square to 2D Cross}
The last example that we present finds the optimal plan between a square and a cross, both known through 10000 data points. Both distributions are uniform within their support. The mapped data points are shown in Figure \ref{figureSquareMap}.
\begin{figure}
\centering
  \includegraphics[width=5cm,height=4cm]{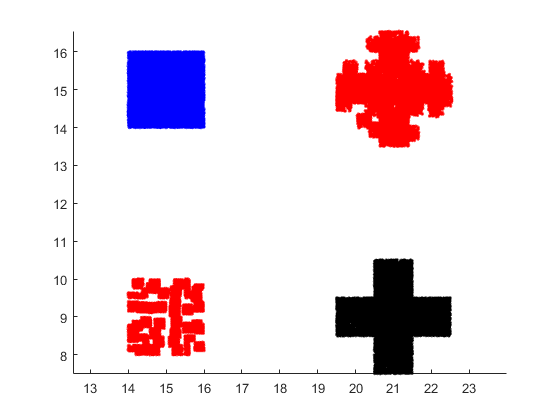}
  \includegraphics[width=5cm,height=4cm]{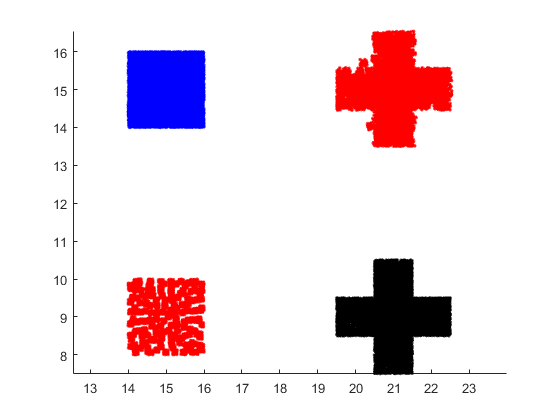}
  \includegraphics[width=5cm,height=4cm]{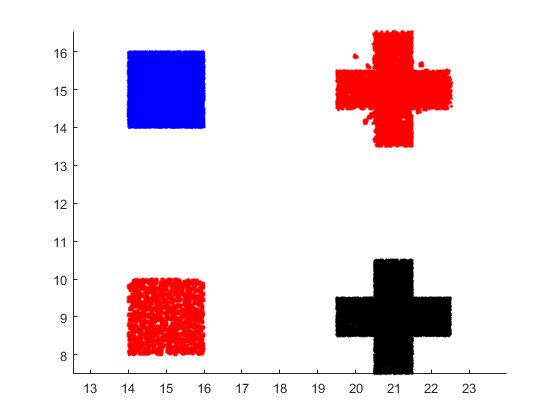}
  \includegraphics[width=5cm,height=4cm]{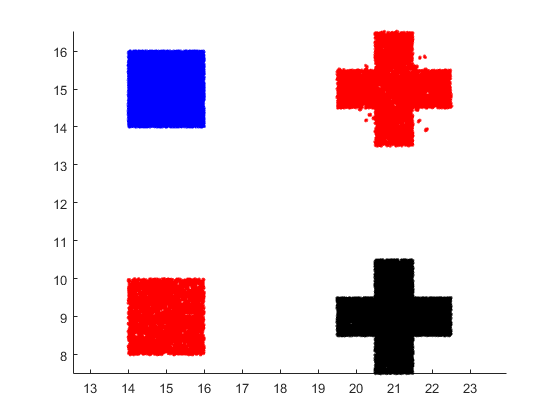}
  \includegraphics[width=5cm,height=4cm]{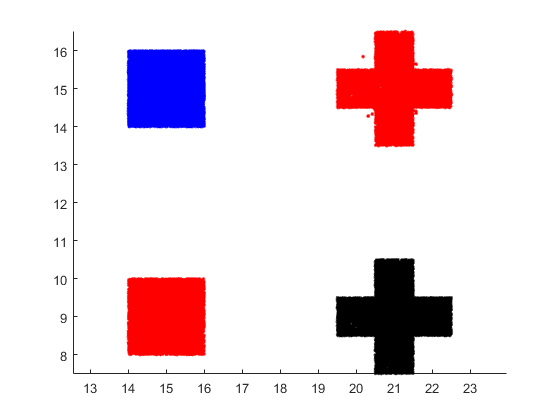}
  \includegraphics[width=5cm,height=4cm]{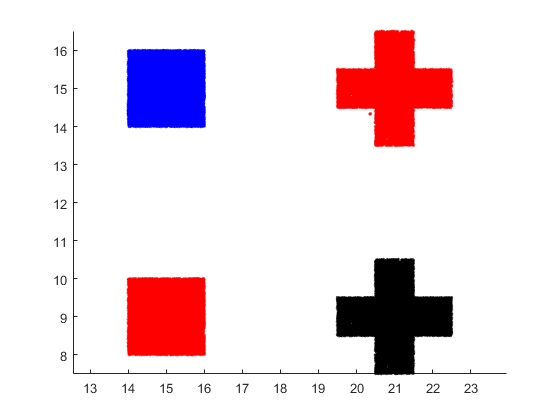}
\caption{Comparison between the target data clouds and recovered data clouds from the numerical solution in each refinement step. Blue data clouds are drawn from the uniform square, black data clouds are drawn from a uniform cross. The red data clouds are recovered from the numerical approximation to the optimal map. The grid size is doubled in each refinement (left to right, top to bottom.)}
\label{figureSquareMap}
\end{figure}

\section{The Wasserstein Barycenter}
\label{ch-2}

The Wasserstein Barycenter problem\cite{Mar10} is a special variant of the optimal transport problem, defined as the minimizer of
\begin{equation}
 (\mathcal{P})\qquad\inf_\nu \sum_{i=1}^p\lambda_i W_2^2(\nu_i,\nu),
\end{equation}
and equivalent to the multi marginal optimal transport problem \cite{Gan98}. The size of a naive discretization of the multi marginal optimal transport problem increases exponentially with the number of marginal distributions, while the size of the Wasserstein barycenter problem increases only linearly. However, solving naively  the resulting large linear programming is costly. Hence we use instead an iterative method that solves the Wasserstein barycenter problem through a set of small pairwise linear programming problems.

\subsection{A numerical scheme for Wasserstein Barycenter}
\label{sec2.2}

Pedro C. Alvariz-Esteban, E.del Barrio, J.A. Cuesta-Albertos and C. Matran (\cite{Ped15}) proposed an effective iterative approach to solve the Wasserstein Barycenter problem.

Following their notation, $\mathcal{P}_{2,ac}(\mathbb{R}^d)$ denotes the set of Borel probabilities on $\mathbb{R}^d$ with finite second moment that are absolutely continuous with respect to the Lebesgue measure, $L(X)$ denotes the probability measure of a random vector $X$, and the optimal transportation maps between probability measure $\nu$ and $\nu_j$ is denoted $T_j$, so
\begin{equation}
L(T_j(X)) = \nu_j, \quad\text{when}\quad L(X) = \nu.
\end{equation}
With this notation, a transformation $G$ is defined through
\begin{equation}
G(\nu) :=L(\sum_{i=1}^p\lambda_iT_i(X)),\quad \text{where}\quad L(X) = \nu.
\end{equation}
Applying $G$ iteratively to an initial distribution converges to the Wasserstein Barycenter \cite{Ped15}.

This iterative procedure can be applied with any numerical method that can solve pairwise optimal transport problems. Especially when combining with our data driven formulation, the iterative procedure becomes very straightforward and easy to implement, since it does not involve any density estimation and Jacobian determinant calculation. The following iterative algorithm describes the data driven formulation of the iterative method.

\begin{algorithm}
\caption{Iterative algorithm to calculate Wasserstein barycenter}
\label{iterativeBarycenter}
Given data samples from $p$ probability measures $\nu_1,\ldots,\nu_p$, denoted $\{x_l^1\}_{l=1}^{n_1},\ldots,\{x_l^p\}_{l=1}^{n_p}$, draw samples $\{y_l^{0}\}_{l=1}^{n_B}$ from any distribution (for instance, $\{y_l^{0}\}_{l=1}^{n_B}=\{x_l^1\}_{l=1}^{n_1}$ ) and repeat the following steps until convergence:
\begin{enumerate}
\item Solve the pairwise data driven optimal transport problem between data samples $\{y_l^{n}\}_{l=1}^{n_B}$ and $\{x_l^i\}_{l=1}^{n_i}$. For each pairwise problem, obtain data points $\{y_l^{i,n+1}\}_{l=1}^{n_B}$ mapped from $\{y_l^{n}\}_{l=1}^{n_B}$ under the corresponding optimal map.
\item Calculate the new barycenter samples $\{y_l^{n+1}\}_{l=1}^{n_B}$ using the fix point update
\begin{equation}
y_l^{n+1} = \sum_{i=1}^p\lambda_iy_l^{i,n+1}
\end{equation}
\end{enumerate}
\end{algorithm}

One practical advantage of this formulation is that step (1) in algorithm \ref{iterativeBarycenter} can be parallellized,  decomposing the barycenter problem into a series of pairwise optimal transport problems. Since the marginal information is provided through samples, it makes sense to return the estimated barycenter also through samples, as in algorithm \ref{iterativeBarycenter}.

\subsection{Numerical Results}
\label{sec2.3}
\subsubsection{Displacement interpolation between two marginal distributions}

We consider again the map between a uniform square and a uniform cross, using algorithm \ref{iterativeBarycenter} to calculate the barycenter under different weights, which is equivalent to the finding the displacement interpolation between the two marginal densities.
The results are shown in Figure \ref{figure_squareCrossBarycenter}.

\begin{figure}
\centering
  \includegraphics[width=4cm,height=3.2cm]{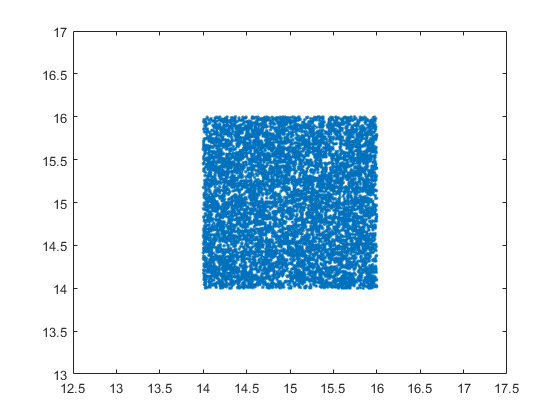}
  \includegraphics[width=4cm,height=3.2cm]{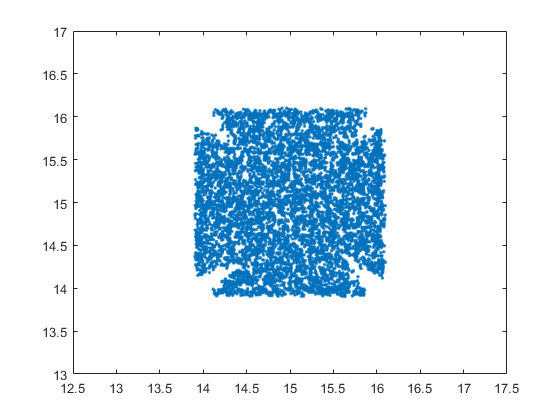}
  \includegraphics[width=4cm,height=3.2cm]{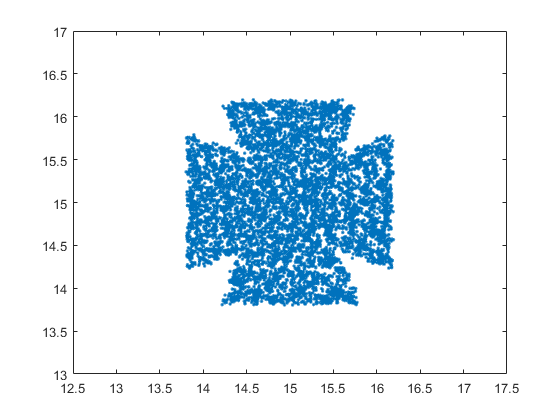}
  \includegraphics[width=4cm,height=3.2cm]{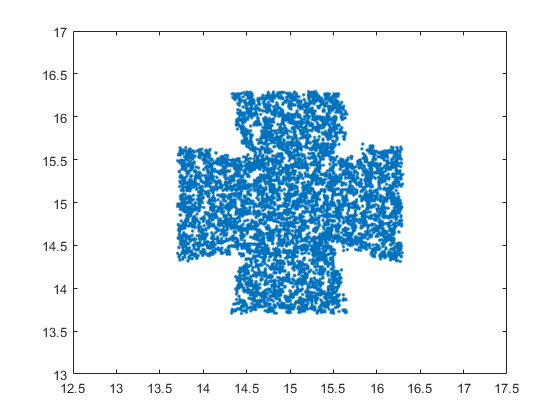}
  \includegraphics[width=4cm,height=3.2cm]{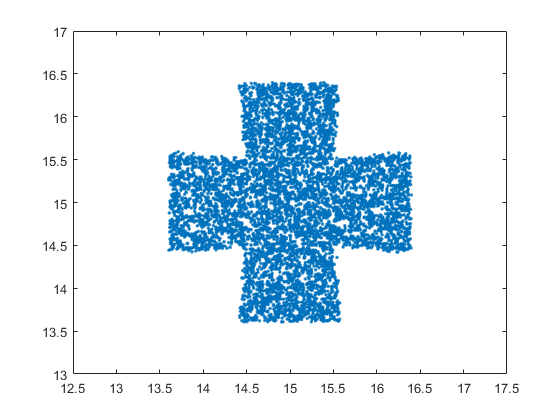}
  \includegraphics[width=4cm,height=3.2cm]{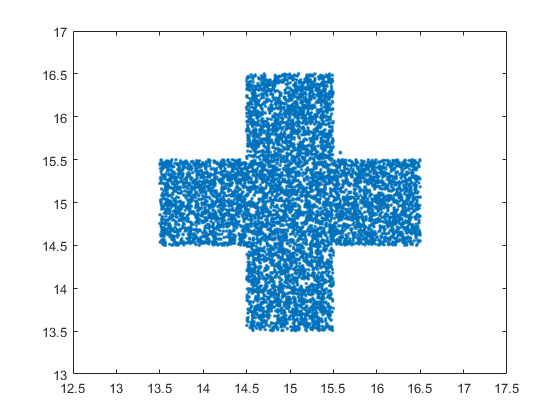}
\caption{Interpolants or weighted barycenters between two distributions, with weights assigned to the first marginal equal to $1,0.8,0.6,0.4,0.2\text{ and }0. $}
\label{figure_squareCrossBarycenter}
\end{figure}

\subsubsection{Wasserstein Barycenter with different weights}

We calculate the Wasserstein barycenters with different weights between three data sets: a circle, a square and a cross. All data samples are drawn from a uniform distribution supported within each shape. The results are shown in Figure \ref{figure_circleSquareCrossBarycenter}.

\begin{figure}
\centering
  \includegraphics[width=13cm,height=13cm]{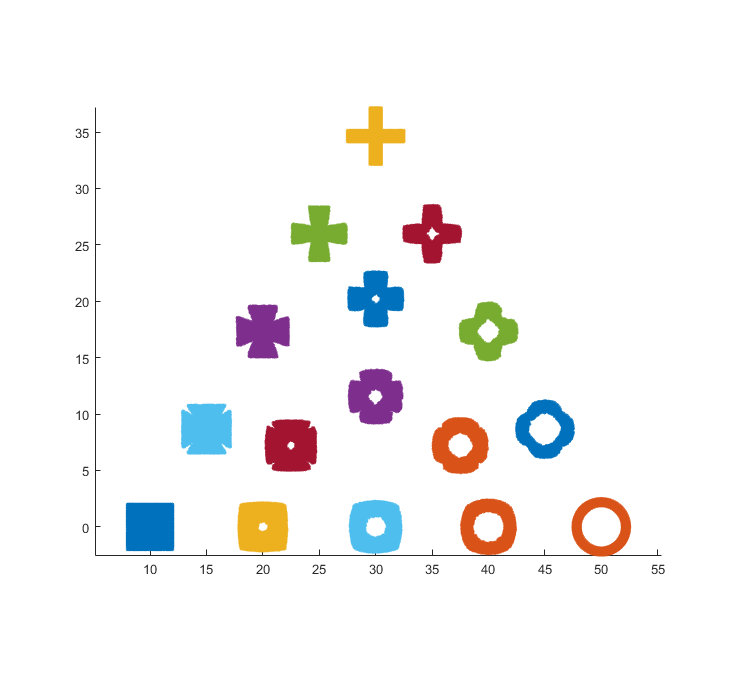}
\caption{Barycenters with different weights between a circle, a square and a cross.}
\label{figure_circleSquareCrossBarycenter}
\end{figure}

\section{Applications}
\label{ch-3}

\subsection{Visualization of perturbed images}
\label{sec3.1}
The Wasserstein Barycenter algorithm is applied in this section to visualize perturbed digit images. We use 550 images from the Chars74K dataset for digits 0-9 (each digit with 55 images). Each image (originally of size $1200\times 900$) is resized to size $512\times512$ and is randomly shifted.

To calculate the Wasserstein barycenter of the 55 images, we use algorithm \ref{iterativeBarycenter} and run the fix point iteration for 3 times (calculating 55 pairwise optimal transportation problems 3 times for each digit). Each pairwise optimal transport problem is run on grids 16, 32, 64,128 and 256, using the grid refinement procedure. All linear programming problems are solved using the commercial software Gurobi. The whole program is run in a parallel computation setting with 12 workers and costs around 23 minutes for each digit. By contrast, it is hard for traditional methods to solve a pairwise optimal transport problem on a $256\times256$ grid alone, not to mention solving a barycenter problem with 55 marginal densities each on a $256\times256$ grid.

The results are shown in figures \ref{figure_applicationDigits1} and \ref{figure_applicationDigits2} (bottom line), compared with the Euclidean barycenter (top line) and the Euclidean barycenter after re-centering images (mid line). The Wasserstein barycenter results are sharper and clearer.

\begin{figure}
\centering
  \includegraphics[width=2.5cm,height=3cm]{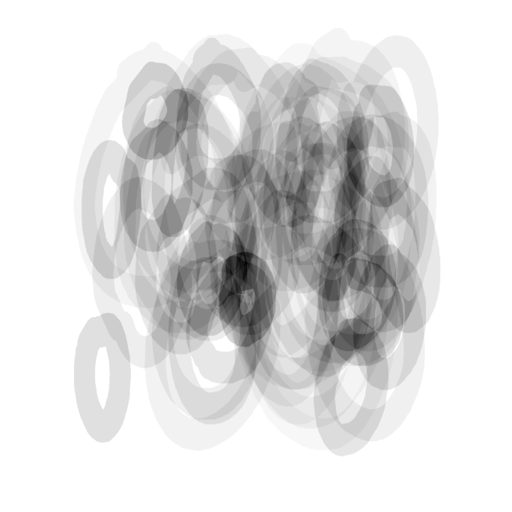}
  \includegraphics[width=2.5cm,height=3cm]{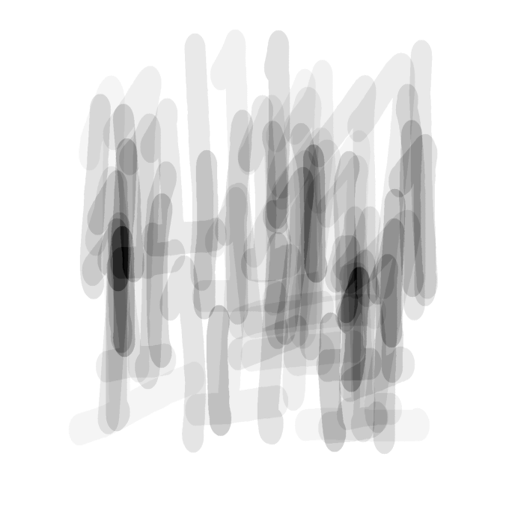}
  \includegraphics[width=2.5cm,height=3cm]{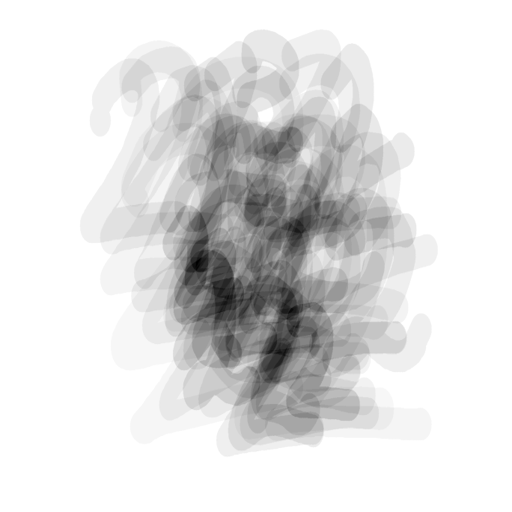}
  \includegraphics[width=2.5cm,height=3cm]{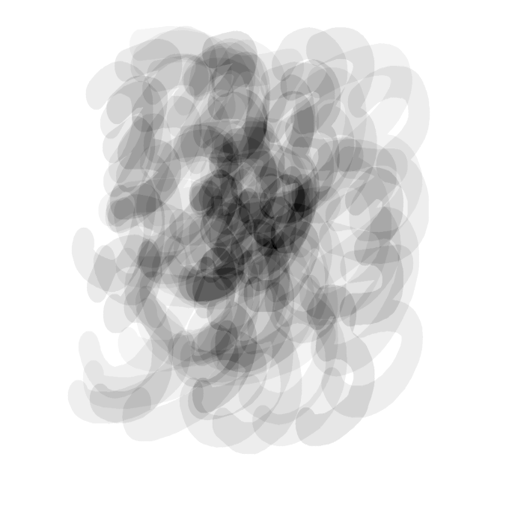}
  \includegraphics[width=2.5cm,height=3cm]{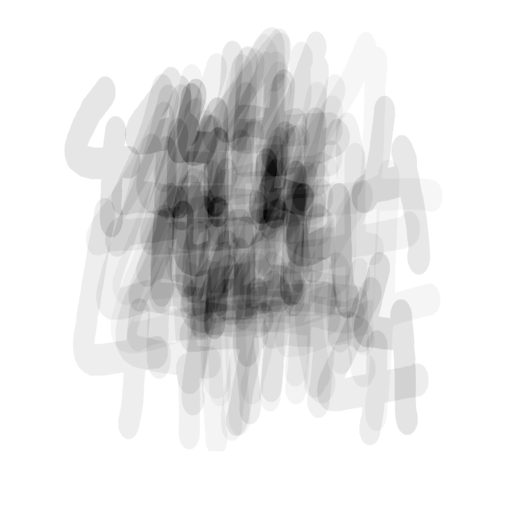}
  \includegraphics[width=2.5cm,height=3cm]{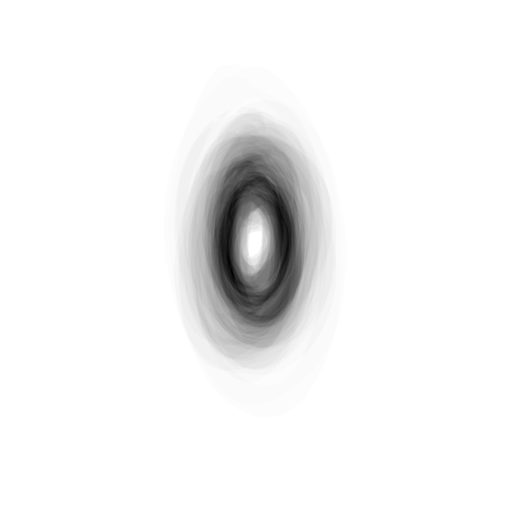}
  \includegraphics[width=2.5cm,height=3cm]{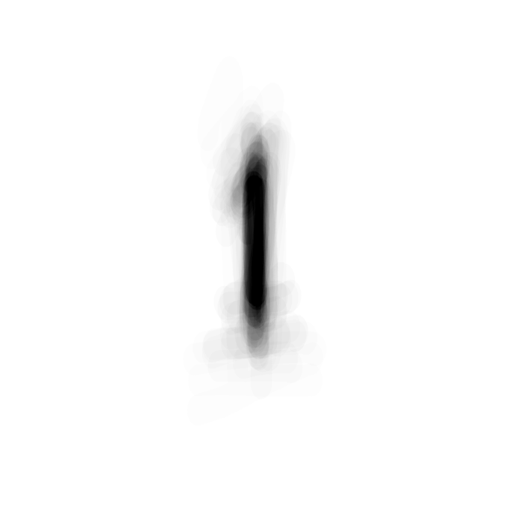}
  \includegraphics[width=2.5cm,height=3cm]{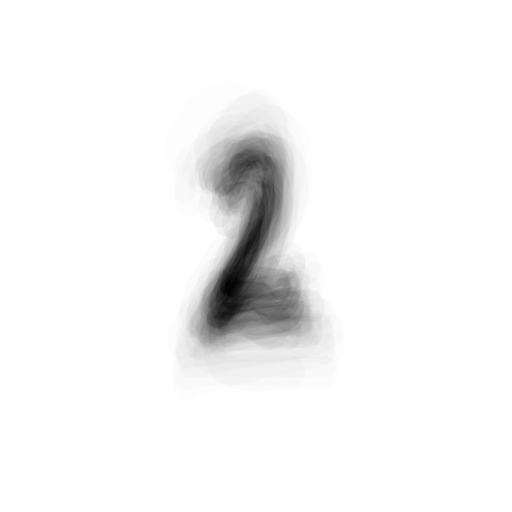}
  \includegraphics[width=2.5cm,height=3cm]{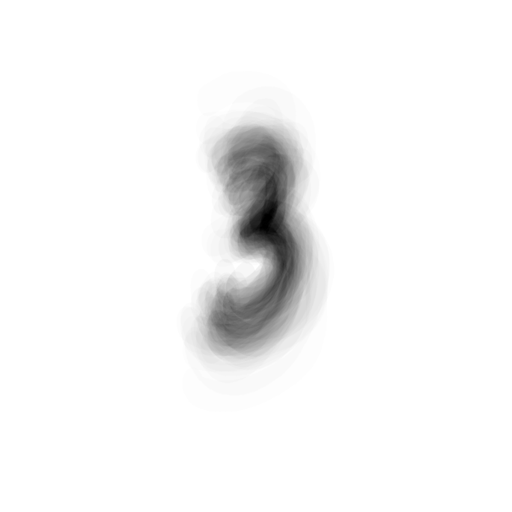}
  \includegraphics[width=2.5cm,height=3cm]{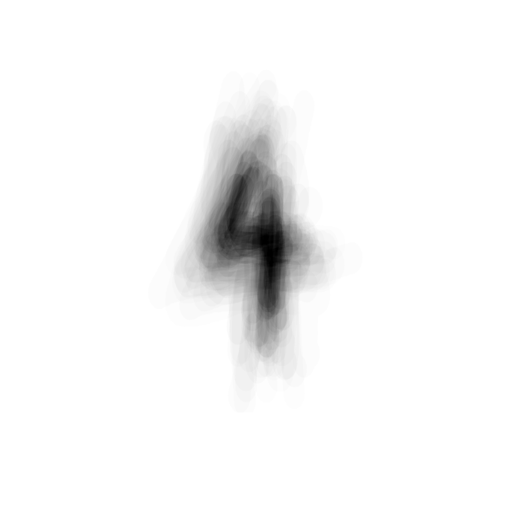}
  \includegraphics[width=2.5cm,height=3cm]{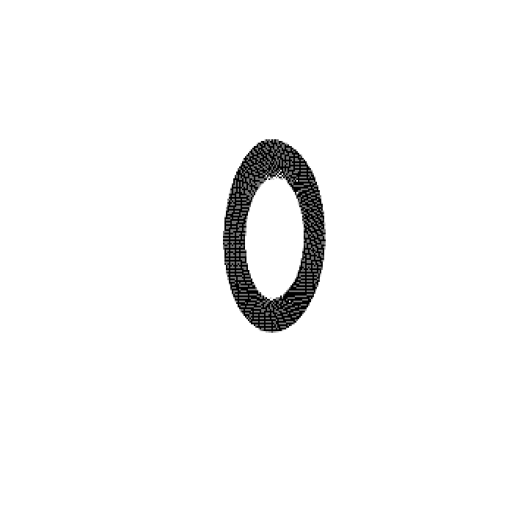}
  \includegraphics[width=2.5cm,height=3cm]{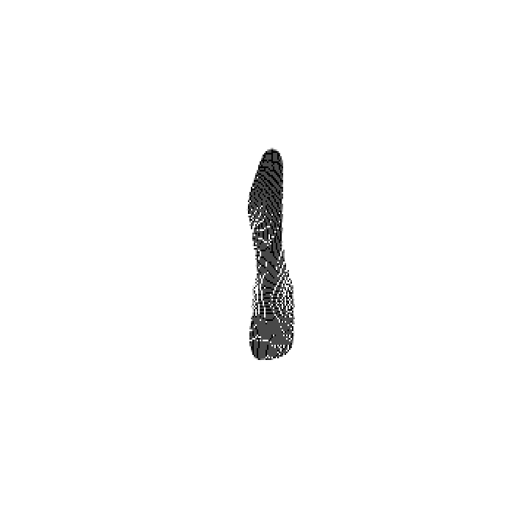}
  \includegraphics[width=2.5cm,height=3cm]{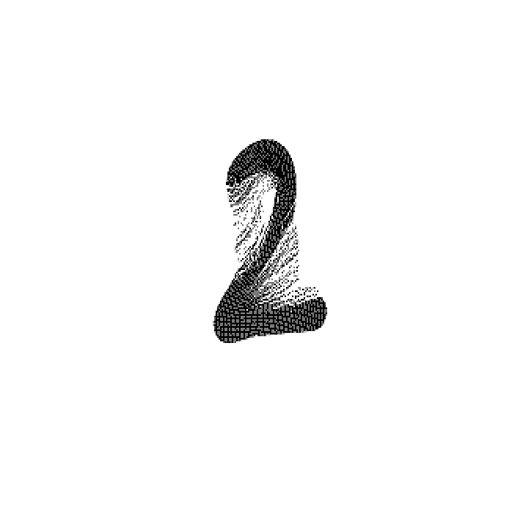}
  \includegraphics[width=2.5cm,height=3cm]{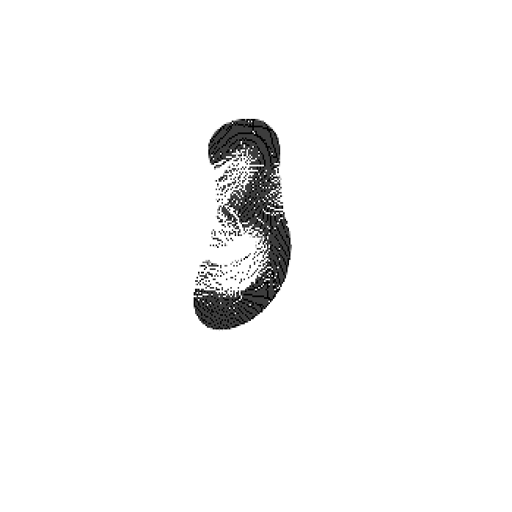}
  \includegraphics[width=2.5cm,height=3cm]{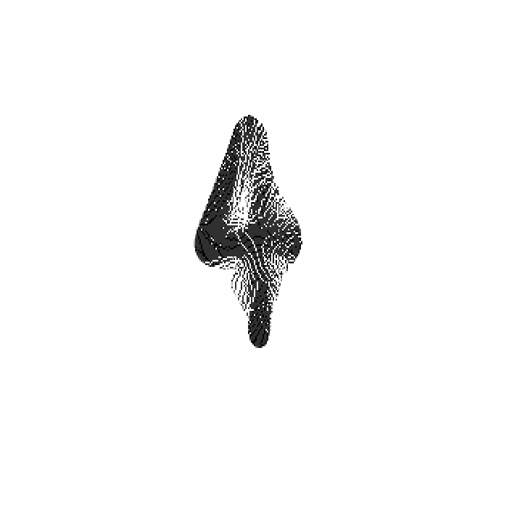}
  \caption{Barycenter of 55 handwritten digits for 0-4. Top row: Euclidean barycenter; Mid row: Euclidean barycenter after re-centering; Bottom row: Wasserstein barycenter.}
\label{figure_applicationDigits1}
\end{figure}
\begin{figure}
\centering
  \includegraphics[width=2.5cm,height=3cm]{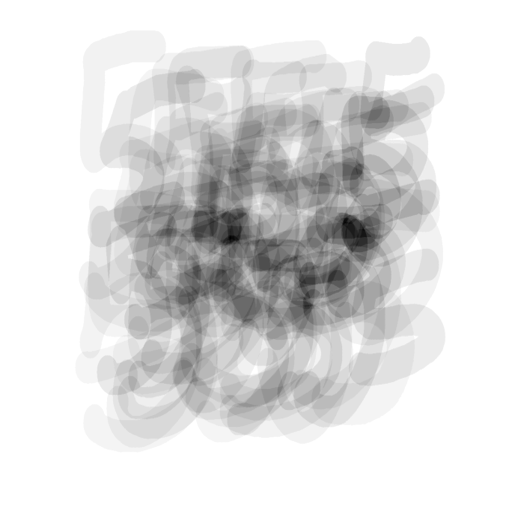}
  \includegraphics[width=2.5cm,height=3cm]{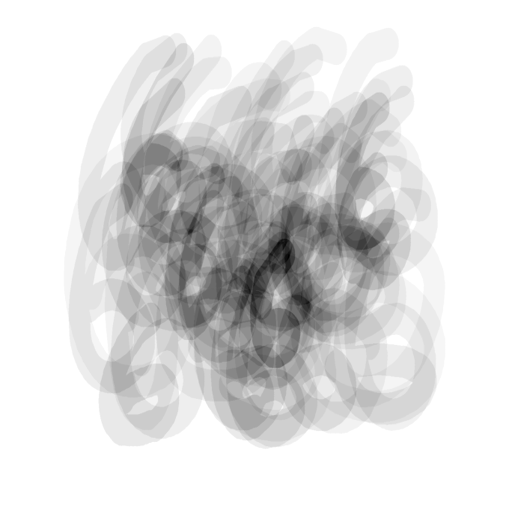}
  \includegraphics[width=2.5cm,height=3cm]{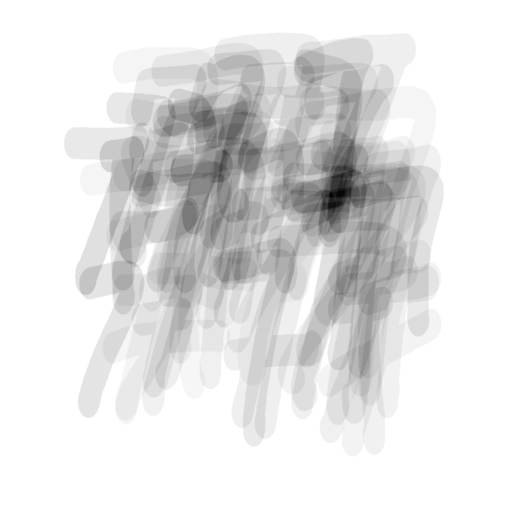}
  \includegraphics[width=2.5cm,height=3cm]{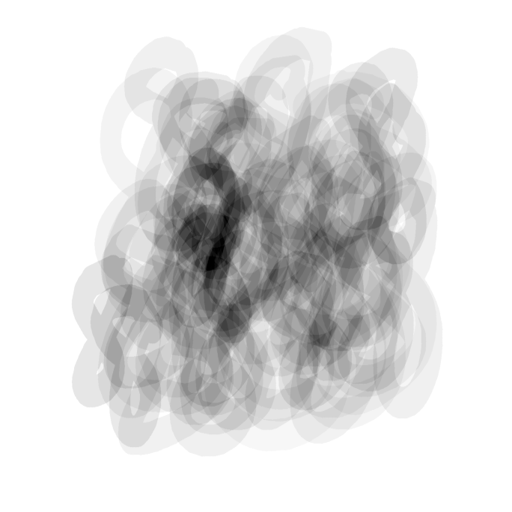}
  \includegraphics[width=2.5cm,height=3cm]{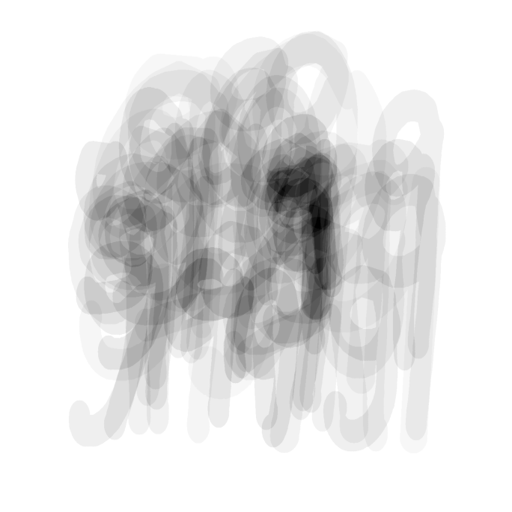}
  \includegraphics[width=2.5cm,height=3cm]{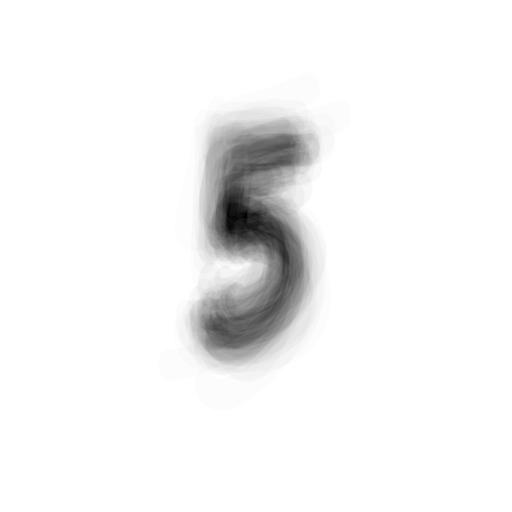}
  \includegraphics[width=2.5cm,height=3cm]{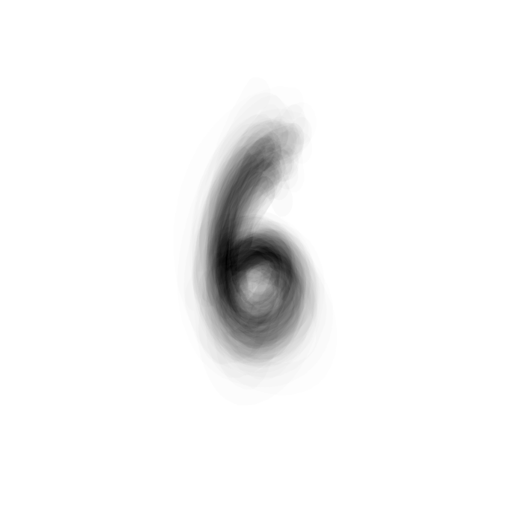}
  \includegraphics[width=2.5cm,height=3cm]{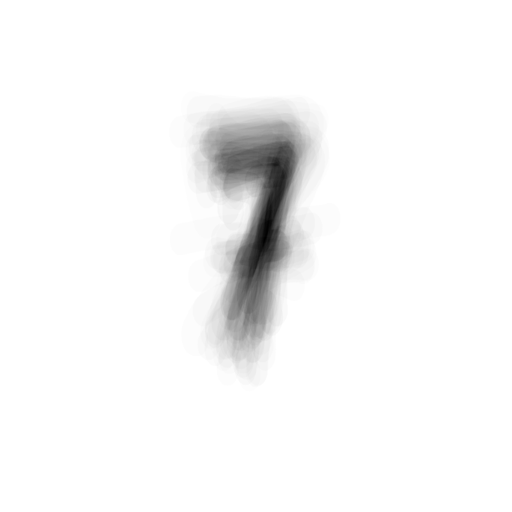}
  \includegraphics[width=2.5cm,height=3cm]{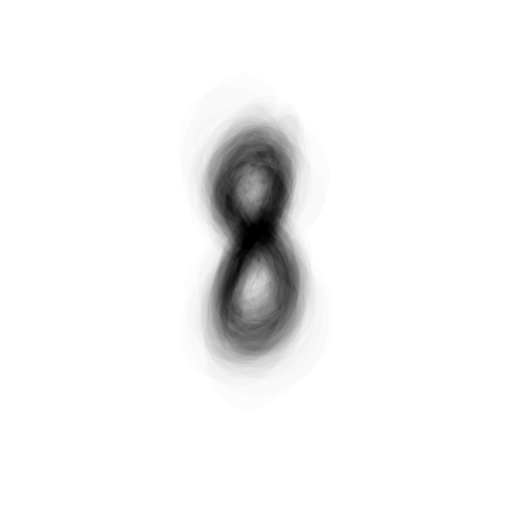}
  \includegraphics[width=2.5cm,height=3cm]{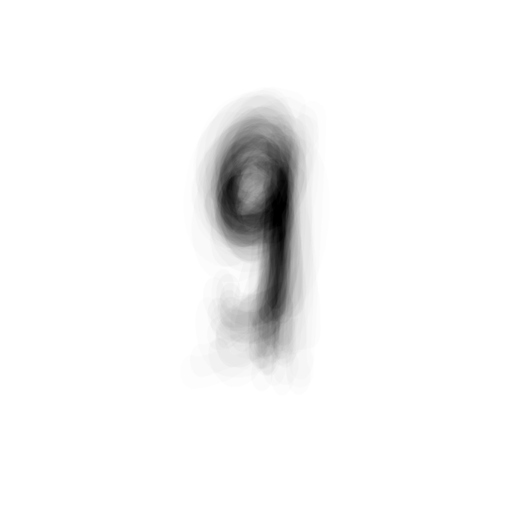}
  \includegraphics[width=2.5cm,height=3cm]{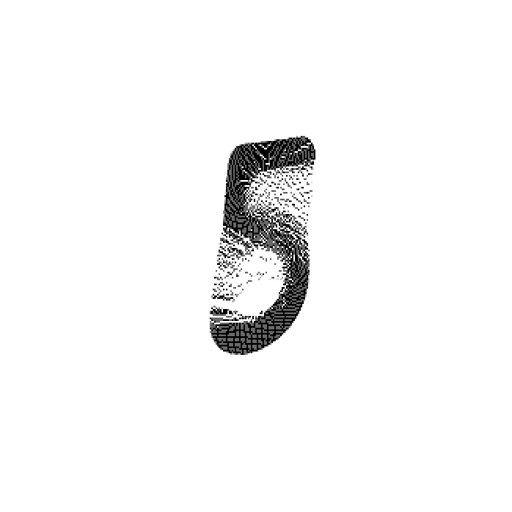}
  \includegraphics[width=2.5cm,height=3cm]{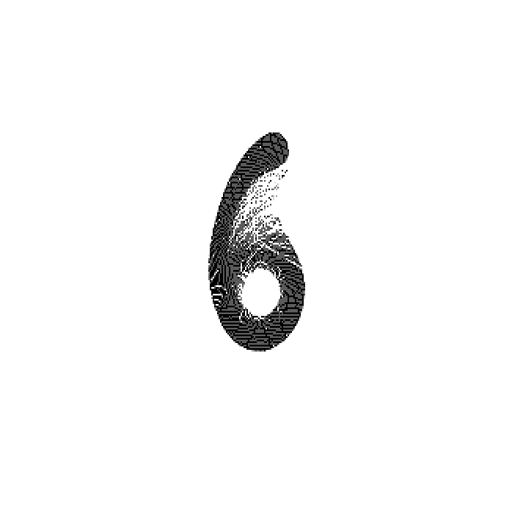}
  \includegraphics[width=2.5cm,height=3cm]{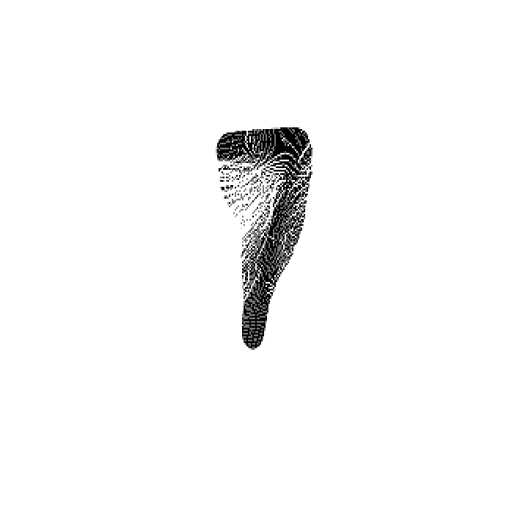}
  \includegraphics[width=2.5cm,height=3cm]{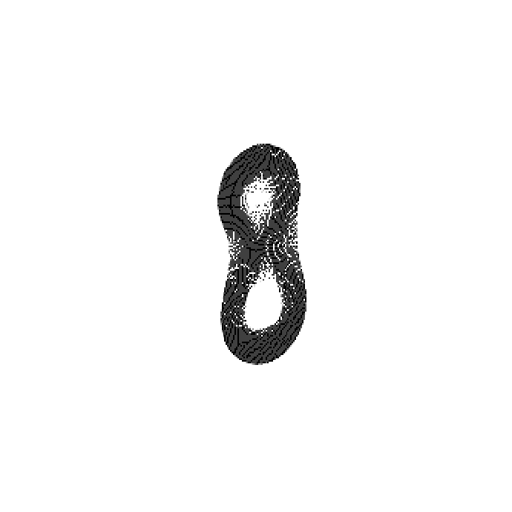}
  \includegraphics[width=2.5cm,height=3cm]{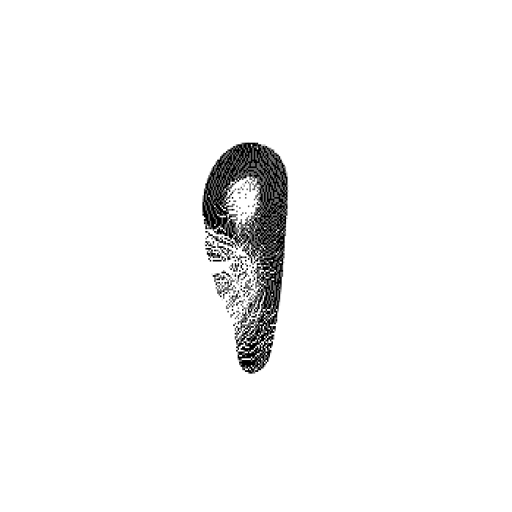}
\caption{Barycenter of 55 handwritten digits for 5-9. Top row: Euclidean barycenter; Mid row: Euclidean barycenter after re-centering; Bottom row: Wasserstein barycenter.}
\label{figure_applicationDigits2}
\end{figure}

\subsection{Texture synthesis}
\label{sec3.2}
Following the framework introduced by Rabin, Peyr\'e, Delon and Bernot (\cite{Rab10}), we apply our optimal transport and barycenter methods to texture synthesis using both first order and high order statistical mixing.

A general texture synthesis procedure usually consists of the following steps:
\begin{enumerate}
\item Use filters to project the image under consideration  onto a feature space.
\item Starting from a random Gaussian noise image, use the same filters to construct the feature space representation for this image. Then map the features of the noise image to the desired image so that the features share the same statistical properties.
\item Recover a new noise image from the mapped features.
\item Enforce the color pixel distribution on the new noise image.
\end{enumerate}
Steps 2,3 and 4 are iterated to enforce the statistical distribution of the feature and color pixel of the desired image.

We adopt a set of steerable pyramid filters with 4 scales and 4 orientations together with a coarse scale frame and a high frequency frame. (That is $4\times4+2=18$ kinds of filters.). Credit is due for the filter implementation to the Center for Neural Science in New York University, as they keep a good repository of Matlab codes for this job.

In this application, our optimal transport method is applied in steps 2 and 4, for constructing a map between two feature distributions/color distributions. The benefits lie in:
\begin{itemize}
\item The feature distribution is known through points, so naturally fit a data driven framework such as ours.
\item The goal of step 2 is to match the statistical distribution of two feature spaces. To put it in optimal transport language, the correctness of the marginal distribution is more important than the optimality of the map. This naturally favors our method as the constraints --with estimated marginals-- are satisfied exactly.
\item The matching distribution can be of very high dimensionality. Implementing a mild adjustment of our refinement algorithm, our methods can be easily applied to a high dimensional setting.
\end{itemize}

Examples of texture synthesis are shown in Figure \ref{figure_firstOrderMixing1}, with the target texture on the left column and the synthesized textures from white noise using first order model and the new pairwise optimal transport solver on the right column.

Results of texture mixing with different weights between two textures are shown in Figure \ref{figure_firstOrderMixing2}. The iterative Wasserstein barycenter algorithm is used to calculate barycenter of each feature. Then the pairwise optimal transport solver is used to map the feature of a random Gaussian texture to the barycenter.

\begin{figure}
\centering
  \includegraphics[width=5cm,height=5cm]{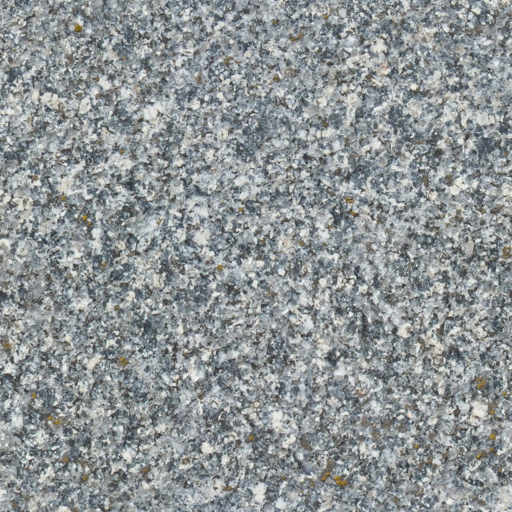}
  \includegraphics[width=5cm,height=5cm]{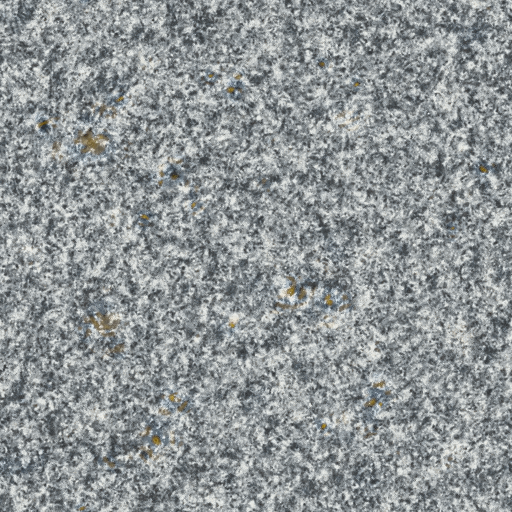}
  \includegraphics[width=5cm,height=5cm]{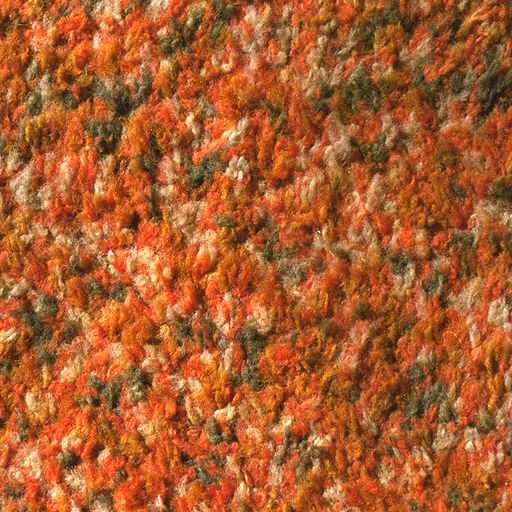}
  \includegraphics[width=5cm,height=5cm]{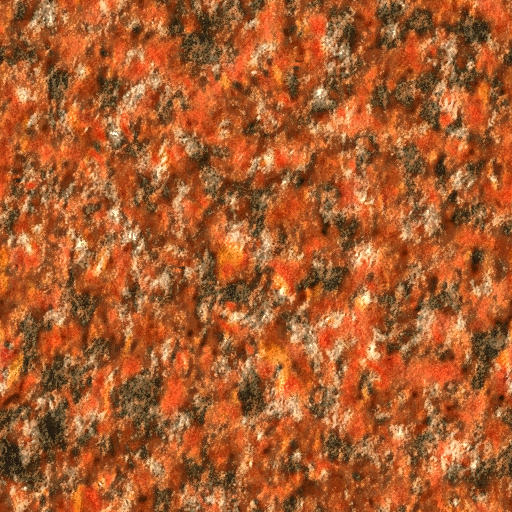}
\caption{Left column: target texture. Right column: synthesized texture from a realization of Gaussian noise using the first order model.}
\label{figure_firstOrderMixing1}
\end{figure}

\begin{figure}
\centering
\begin{tabular}{cccc}
  \subfloat[Original Figure]{\includegraphics[width=2.7cm,height=3cm]{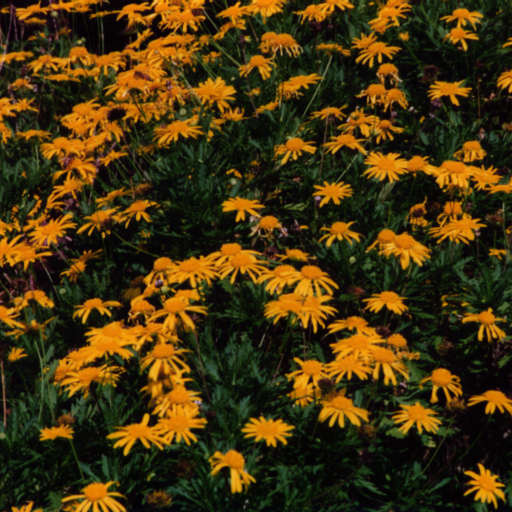}}&
  \subfloat[$\rho=0.1$]{\includegraphics[width=2.7cm,height=3cm]{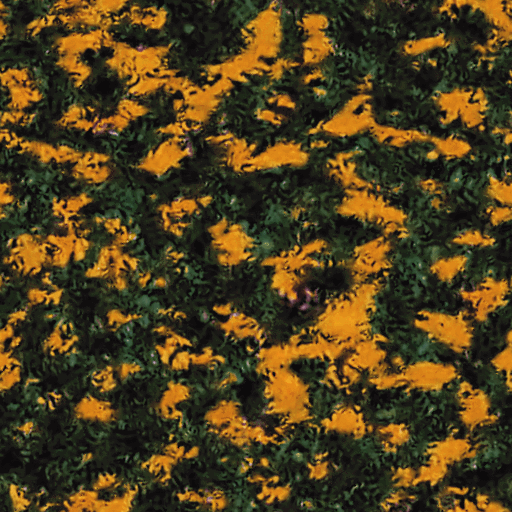}}&
  \subfloat[$\rho=0.2$]{\includegraphics[width=2.7cm,height=3cm]{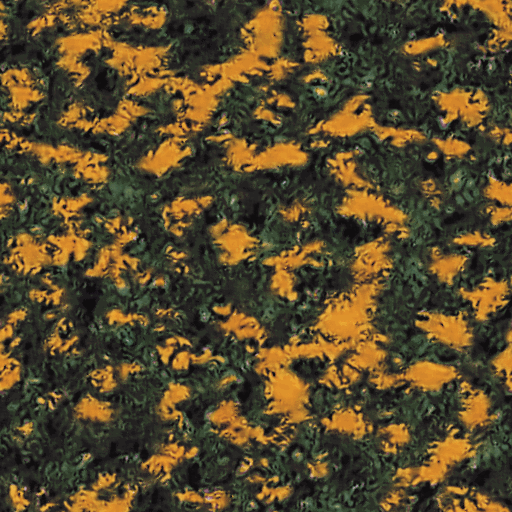}}&
  \subfloat[$\rho=0.3$]{\includegraphics[width=2.7cm,height=3cm]{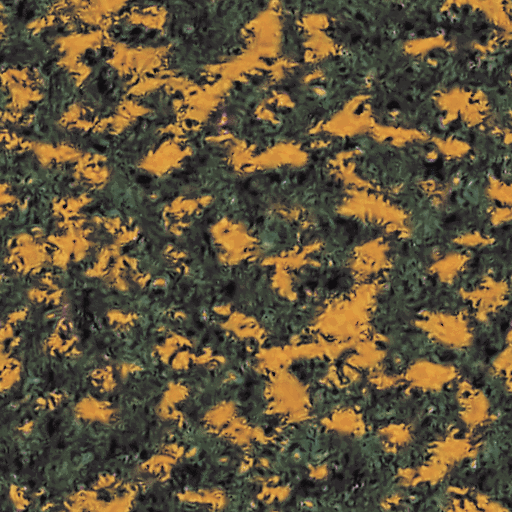}}\\
  \subfloat[$\rho=0.4$]{\includegraphics[width=2.7cm,height=3cm]{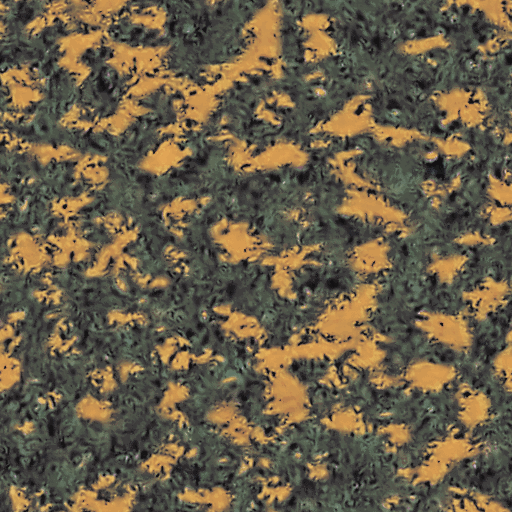}}&
  \subfloat[$\rho=0.5$]{\includegraphics[width=2.7cm,height=3cm]{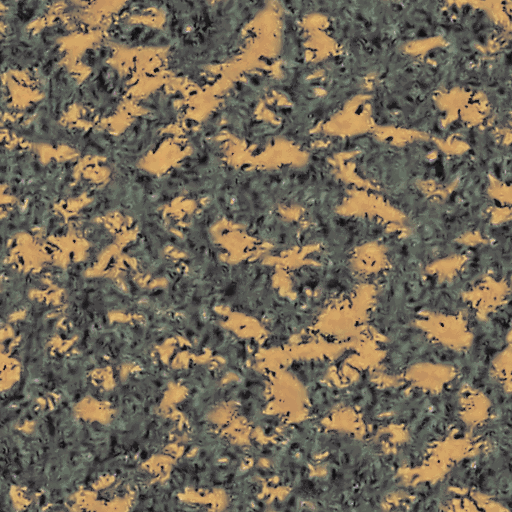}}&
  \subfloat[$\rho=0.6$]{\includegraphics[width=2.7cm,height=3cm]{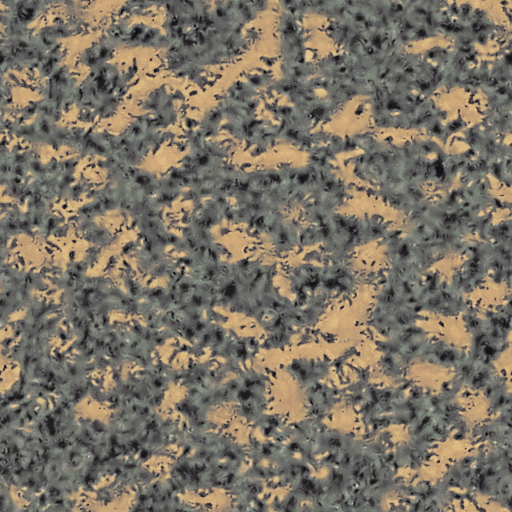}}&
  \subfloat[$\rho=0.7$]{\includegraphics[width=2.7cm,height=3cm]{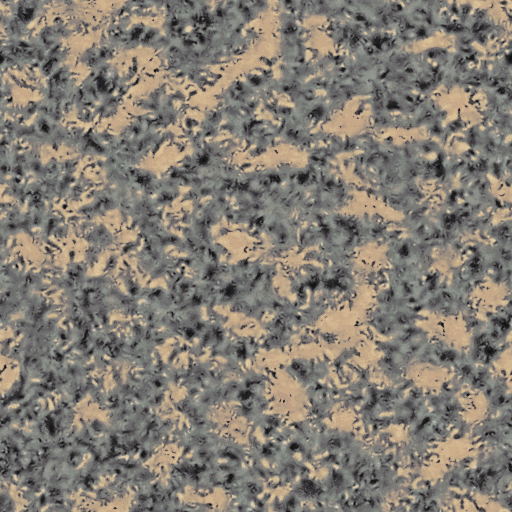}}\\
  \subfloat[$\rho=0.8$]{\includegraphics[width=2.7cm,height=3cm]{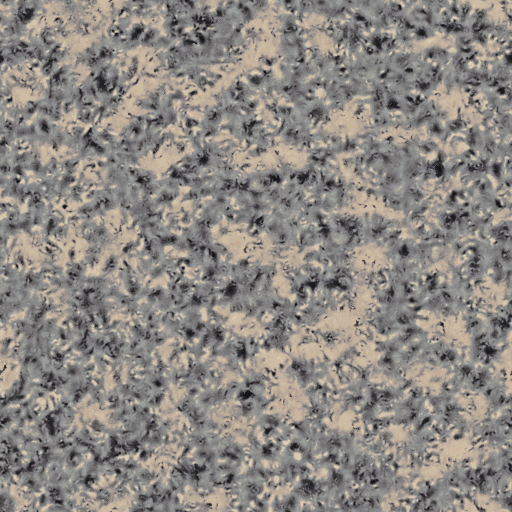}}&
  \subfloat[$\rho=0.9$]{\includegraphics[width=2.7cm,height=3cm]{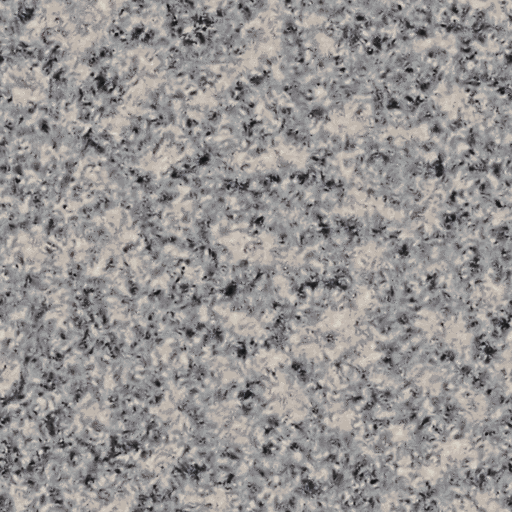}}&
  \subfloat[$\rho=1.0$]{\includegraphics[width=2.7cm,height=3cm]{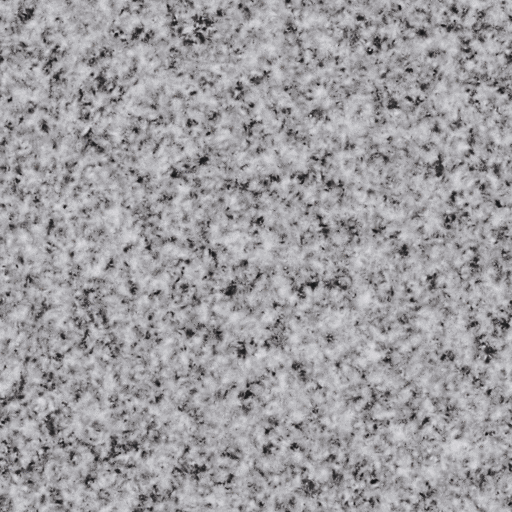}}&
  \subfloat[Original Figure]{\includegraphics[width=2.7cm,height=3cm]{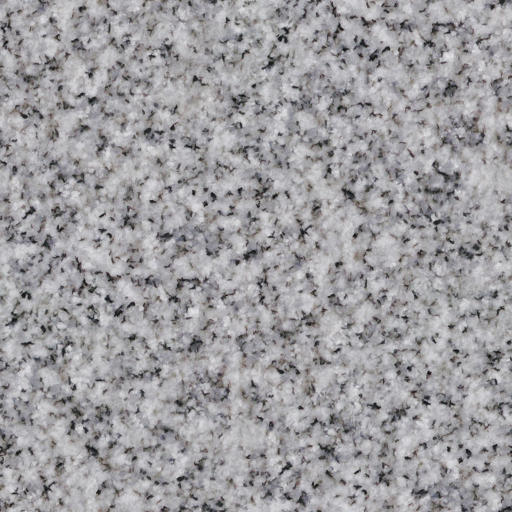}}
\end{tabular}
 \caption{Mixing of two textures using the first order model.}
\label{figure_firstOrderMixing2}
\end{figure}

The high order statistical model introduced by Rabin, Gabriel, Julie and Bernot (\cite{Rab10}) follows the methodology from work of Portilla and Simoncelli (\cite{Por00}) and utilize the joint distribution of local neighbours to explain the spatial correlation. Clustering the feature space into neighbourhoods, the joint distribution has higher dimensionality but fewer data points. For instance, for a $4\times 4$ block neighborhood, the dimensionality becomes $4\times4\times3 = 48$, while the number of data points decreases by a factor of $1/16$.

To deal with high dimensional optimal transport, we make some adjustments to our refinement algorithm:
\begin{enumerate}
\item To initialize the algorithm, we do not discretize every dimension but only the one with longest support (For instance, divide the data cloud into two parts.)
\item In each iteration, we no longer cut each dimension into a half but only refine the dimension with longest support, thus in each iteration the number of marginal variables is doubled.
\end{enumerate}
Since the function space in each iteration is constrained, the size of the linear programming problem still grows linearly with the number of marginal variables. Though the number of marginal variables grows exponentially, it does so with a constant factor 2 regardless of the dimensionality of the data.

Figure \ref{figure_highOrder} shows some examples of using the higher order model with different neighborhoods. Compared with the first order model, the higher order model requires computation of high dimensionality but also captures more detailed structures in the texture.

\begin{figure}
\centering
  \includegraphics[width=3cm,height=3cm]{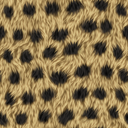}
  \includegraphics[width=3cm,height=3cm]{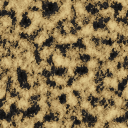}
  \includegraphics[width=3cm,height=3cm]{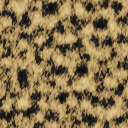}
  \includegraphics[width=3cm,height=3cm]{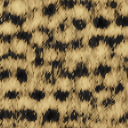}\\
  \vspace{0.1cm}
  \includegraphics[width=3cm,height=3cm]{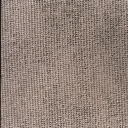}
  \includegraphics[width=3cm,height=3cm]{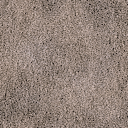}
  \includegraphics[width=3cm,height=3cm]{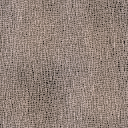}
  \includegraphics[width=3cm,height=3cm]{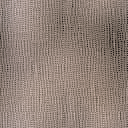}\\
  \vspace{0.1cm}
  \includegraphics[width=3cm,height=3cm]{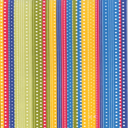}
  \includegraphics[width=3cm,height=3cm]{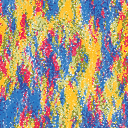}
  \includegraphics[width=3cm,height=3cm]{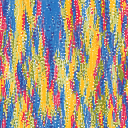}
  \includegraphics[width=3cm,height=3cm]{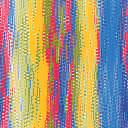}\\
  \vspace{0.1cm}
  \includegraphics[width=3cm,height=3cm]{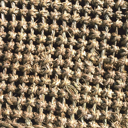}
  \includegraphics[width=3cm,height=3cm]{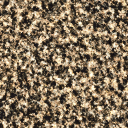}
  \includegraphics[width=3cm,height=3cm]{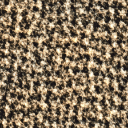}
  \includegraphics[width=3cm,height=3cm]{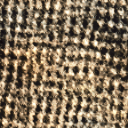}\\
  \vspace{0.1cm}
  \includegraphics[width=3cm,height=3cm]{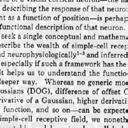}
  \includegraphics[width=3cm,height=3cm]{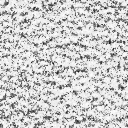}
  \includegraphics[width=3cm,height=3cm]{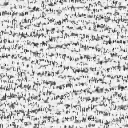}
  \includegraphics[width=3cm,height=3cm]{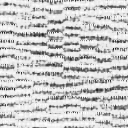}
  \caption{High order synthesis. From left to right: target texture, first order model, $4\times4$ and $8\times8$ neighbours using high order model}
\label{figure_highOrder}
\end{figure}

\section{Summary}
This article develops a linear-programming-based data driven methodology to solve the optimal transportation problem though a sequence of refined meshes. It extends the work in \cite{Adam15}, achieving a better approximation to the solution with almost no additional effort, by approximating the marginal distributions through mixtures, where each component has support in one rectangular cell. In addition, these components are factorized into products of one-dimensional distributions, for which a) the density estimation is straightforward, and b) the pairwise optimal transport between individual $x$ and $y$ components can be solved in closed form.
The new method requires only very elementary, one-dimensional and local density estimations. It involves no additional time dimension or partial differential equation to solve, and it can be calculated fast by constraining the function space in each step.
An adaptively refined mesh solves the problem in multi grids with a number of unknwon that grows only linearly with the size of the discretization of the marginals.


For the Wasserstein barycenter problem, we apply the iterative approach of \cite{Ped15} and combine it with the new pairwise optimal transport problem solver. The resulting data-driven algorithm is naturally parallelizable, making possible the calculation of the barycenter for large number of marginal densities.

The metholology is illustrated through two applications: the visualization of blurred images, and texture mixing using both first order and higher order statistical models.

\clearpage
\bibliographystyle{spmpsci}      
\bibliography{Chen-Tabak}   

\end{document}